\documentclass[12pt,reqno]{amsart}
\topmargin -5mm \evensidemargin -3mm \oddsidemargin 3mm
\textheight 218mm \textwidth 158mm
\usepackage{graphicx}
\usepackage{verbatim}
\usepackage{textcomp}
\usepackage{amssymb}
\usepackage{cite}
\usepackage{amsmath}
\usepackage{latexsym}
\usepackage{amscd}
\usepackage{amsthm}
\usepackage{mathrsfs}
\usepackage{xypic}
\usepackage{bm}
\usepackage{url}
\usepackage{hyperref}
\usepackage{amsfonts}
\usepackage{times,graphicx,hyperref,mathrsfs}
\usepackage{CJK}
\usepackage{color}
% ----------------------------------------------------------------
\vfuzz2pt % Don't report over-full v-boxes if over-edge is small
\hfuzz2pt % Don't report over-full h-boxes if over-edge is small
% THEOREMS -------------------------------------------------------
\newtheorem{thm}{Theorem}[section]

\newtheorem{lem}[thm]{Lemma}
\newtheorem{prop}[thm]{Proposition}

\theoremstyle{definition}
\newtheorem{defn}{Definition}[section]

\theoremstyle{remark}

\numberwithin{equation}{section}
\setcounter{section}{0}
\def\supp{{\rm{\,supp\,}}}

\allowdisplaybreaks
\begin{document}
\begin{sloppypar}
\title[{Pseudo-differential operators}]
{\uppercase{Weighted weak-type (1, 1) inequalities for pseudo-differential operators with symbol in $S^{m}_{0,\delta}$}}
\author{Guangqing Wang$^{1,3}$}
\author{Suixin He$^{2}$}
\author{Lihua Zhang$^{*3}$}
\address{$1.$ School of Mathematics and Statistics, Fuyang Normal University, Fuyang, Anhui 236041, P.R.China}
\address{$2.$ School of Mathematics and Statistics, Yili Normal University, Yili, Xinjiang 835000, P.R.China}
%\address{$3.$  College of Mathematics and System Science, Xinjiang University, Xinjiang 830046, P.R.China}
\address{$3.$School of Mathematics, Sun Yat-sen University, Guangzhou, Guangdong 510275, P.R.China}
\email{wanggqmath@fynu.edu.cn(G.Wang)}
\email{hesuixinmath@126.com(S.He)}
\email{zhanglh89@muil2.sysu.edu.cn(L.Zhang)}
%\author{Guangqing Wang}
%\author{Wenyi Chen$^{1,2}$}
%\author{Jie Yang$^{2}$}
%
%\address{ School of Mathematics and Statistics, Fuyang Normal University, Fuyang, Anhui 236041, P.R.China}
%\address{$1.$ School of Mathematics and Statistics, Wuhan University, Wuhan University, Wuhan, Hubei 430072, P.R.China}
%\address{$2.$  College of Mathematics and System Science, Xinjiang University, Xinjiang 830046, P.R.China}
\thanks{The research of author is supported in part by Scientific Research Foundation of Education Department of Anhui Province of China (2022AH051320,KJ2021A0659), Doctoral Scientific Research Initiation Project of Fuyang Normal University (2021KYQD0001) and University Excellent Young Talents Research Project of Anhui Province (gxyq2022039).}
\thanks{Corresponding  Author: Lihua Zhang}
%\email{wychencn@whu.edu.cn (W.Chen)}
%\email{yangjie1106@xju.edu.cn (J.Yang)}

\maketitle

\begin{abstract}
Let $T_a$ be a pseudo-differential operator defined by exotic symbol $a$ in H\"{o}rmander class $S^m_{0,\delta}$ with $m \in \mathbb{R} $ and $0 \leq \delta \leq 1 $.
It is well-known that the weak type (1,1) behavior of $T_a $ is not fully understood when the index $m $ is equal to the possibly optimal value $-\frac{n}{2} - \frac{n}{2} \delta $ for $0 \leq \delta < 1 $, and that $T_a $ is not of weak type (1,1) when $m = -n $ and $\delta = 1 $.

In this note, we prove that $T_a $ is of weighted weak type (1,1) if $a \in S^{-n}_{0, \delta}$ with $0 \leq \delta < 1 $. Additionally, we show that the dual operator $T_a^* $ is of weighted weak type (1,1) if $a \in L^\infty S^{-n}_0 $. We also identify $m = -n $ as a critical index for these weak type estimates. As applications, we derive weighted weak type (1,1) estimates for certain classes of Fourier integral operators.

\end{abstract}

{\bf MSC (2010). } Primary 42B20, Secondary 42B37.

{{\bf Keywords}:  Pseudo-differential operators, exotic symbol, weighted weak type (1,1)}

\section{Introduction and main results}
The study of pseudo-differential operators (PDOs) originated with foundational work by H"{o}rmander \cite{Hormander1} and Nirenberg \cite{Nirenberg}. These operators are defined by the following formula:
\begin{eqnarray}\label{df}
T_{a}u(x)
&=&\int_{\mathbb{R}^n} e^{ i \langle x,\xi\rangle}a(x,\xi)  \hat{u}(\xi)d\xi,
\end{eqnarray}
where $\hat{u}$ denotes the Fourier transform of $u$ and $a(x,\xi)$ is the symbol or amplitude of the operator. A key class of symbols is the H\"{o}rmander class $S^{m}_{\varrho,\delta}$ \cite{Hormander2}. Let $m\in \mathbb{R}$, $0\leq\varrho,\delta\leq1$. A symbol $a(x,\xi)$ belongs to the H\"{o}rmander class $S^{m}_{\varrho,\delta}$, if $a(x,\xi)\in C^{\infty}(\mathbb{R}^{n}\times\mathbb{R}^{n})$, and for any multi-indices $\alpha,\beta,$ the following inequality holds:
\begin{eqnarray*}
|\partial^{\beta}_x\partial^{\alpha}_{\xi}a(x,\xi)|\leq C_{\alpha,\beta}\langle\xi\rangle^{m-\varrho |\alpha|+\delta|\beta|},
\end{eqnarray*}
where $C_{\alpha, \beta}$ is a constant depending on $\alpha$ and $\beta$.

The theory of pseudo-differential operators (PDOs) has long been an important area of study, particularly with regard to their behavior in both weighted and unweighted $L^p$ spaces. The fundamental result on $L^2$ regularity was established by H\"{o}rmander \cite{Hormander3} and Hounie \cite{Hounie2}, who showed that if $a \in S^{-\frac{n}{2} \max\{\delta - \varrho, 0\}}_{\varrho, \delta}$ with $0 \leq \varrho \leq 1$ and $0 \leq \delta < 1$, then $T_a$ is bounded on $L^2$. For endpoint estimates, Stein, in his unpublished lecture notes, showed that $T_a$ is of weak type (1,1) if $a \in S^{-\frac{n}{2}(1-\varrho)}_{\varrho, \delta}$ and either $0 < \delta = \varrho < 1$ or $0 \leq \delta < \varrho \leq 1$. A more general result for a broader range of $\varrho$ and $\delta$ was later established by \'{A}lvarez and Hounie \cite{Hounie}, which can be summarized as follows:
\begin{thm}[\'{A}lvarez and Hounie \cite{Hounie}]
Let $0<\varrho\leq1$, $0\leq\delta<1$ and $m\in \mathbb{R}$. If $a(x,\xi)\in S^{m}_{\varrho,\delta}$, then $T_{a}$ is of weak type (1,1), provided
$$m\leq-\frac{n}{2}(1-\varrho)-\frac{n}{2}\max\{\delta-\varrho,0\}.$$

\end{thm}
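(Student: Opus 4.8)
The plan is to prove the distributional bound $|\{x:|T_af(x)|>\lambda\}|\le C\lambda^{-1}\|f\|_{L^1}$ for every $\lambda>0$ by the Calderón--Zygmund method, with the $L^2$ theory supplying the quantitative engine. Observe first that the hypothesis gives in particular $m\le-\tfrac n2\max\{\delta-\varrho,0\}$, so $T_a$ is bounded on $L^2$ by the H\"ormander--Hounie theorem recalled above. The backbone of the argument is a Littlewood--Paley decomposition of the symbol, $a=\sum_{j\ge0}a_j$ with $a_j(x,\xi)=a(x,\xi)\varphi_j(\xi)$ localized to $|\xi|\sim2^j$ (and to $|\xi|\lesssim1$ when $j=0$), so that $T_a=\sum_j T_{a_j}$ with kernels $K_j(x,y)=\int_{\mathbb{R}^n}e^{i\langle x-y,\xi\rangle}a_j(x,\xi)\,d\xi$. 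Integrating by parts in $\xi$ and using that each $\xi$-derivative of $a_j$ gains a factor $2^{-\varrho j}$ on the support, I would record the block estimate
\[ |K_j(x,y)|\le C_N\,2^{j(n+m)}\bigl(1+2^{\varrho j}|x-y|\bigr)^{-N}, \qquad N\in\mathbb{N}. \]
The feature that controls everything is that $K_j$ oscillates in $x-y$ at the fine frequency $2^j$ while its amplitude lives on the coarser envelope scale $2^{-\varrho j}$; the gap between these two scales when $\varrho<1$ is exactly what makes the endpoint delicate.

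I would then perform the Calderón--Zygmund decomposition of $f$ at height $\lambda$, writing $f=g+b$ with $b=\sum_k b_k$, each $b_k$ supported in a dyadic cube $Q_k$ of side $\ell_k$ and centre $y_k$, with $\int b_k=0$, $\|b_k\|_{L^1}\le C\lambda|Q_k|$ and $\sum_k|Q_k|\le C\lambda^{-1}\|f\|_{L^1}$. The good part is handled immediately by the $L^2$ bound: since $\|g\|_\infty\le C\lambda$ and $\|g\|_{L^1}\le\|f\|_{L^1}$, Chebyshev's inequality gives $|\{|T_ag|>\lambda/2\}|\le 4\lambda^{-2}\|T_ag\|_{L^2}^2\le C\lambda^{-2}\|g\|_{L^2}^2\le C\lambda^{-1}\|f\|_{L^1}$. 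Writing $\Omega^*=\bigcup_k Q_k^*$ for the union of the dilated cubes, which satisfies $|\Omega^*|\le C\lambda^{-1}\|f\|_{L^1}$, it remains to control $T_ab$ on the complement of $\Omega^*$.

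The analysis of $T_ab$ away from $\Omega^*$ is the heart of the matter, and it is here that I expect the main obstacle. For a cube $Q_k$ the natural splitting of the frequency sum is at the index where the oscillation wavelength $2^{-j}$ matches the side $\ell_k$, i.e. $2^j\sim\ell_k^{-1}$: the blocks with $2^j\lesssim\ell_k^{-1}$ are essentially constant across $Q_k$ and interact with the cancellation $\int b_k=0$, whereas those with $2^j\gtrsim\ell_k^{-1}$ oscillate within $Q_k$ and are the genuinely strongly singular part. The decisive difficulty is that, unlike the classical Calderón--Zygmund case $\varrho=1$, at the endpoint neither the cancellation estimate for the coarse blocks nor the size estimate for the fine blocks yields a convergent bound cube by cube; the mismatch between the scales $2^{-j}$ and $2^{-\varrho j}$ produces, in each regime taken alone, a divergent power of $\ell_k^{-1}$. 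The resolution is to use the $L^2$ boundedness of the frequency-truncated operators $\sum_{2^j>\ell_k^{-1}}T_{a_j}$ --- whose operator norm is $\lesssim\ell_k^{-m}$ --- together with the almost-orthogonality of the blocks and their rapid off-diagonal decay, and to run the estimate globally, summing first over the cubes of a fixed size and over the frequency blocks, rather than one cube at a time. The exponent $m=-\tfrac n2(1-\varrho)$ is precisely the value at which this global balance closes; for strictly smaller $m$ there is room to spare.

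Finally, the correction $-\tfrac n2\max\{\delta-\varrho,0\}$ enters through the $x$-regularity: when $\delta>\varrho$ each $x$-derivative of $a_j$ costs $2^{\delta j}$ in place of $2^{\varrho j}$, which degrades the $L^2$ operator bound on the $j$-th block by a factor $2^{\frac n2(\delta-\varrho)j}$ (via Calderón--Vaillancourt, or a Cotlar--Stein almost-orthogonality estimate among the blocks), and this loss is compensated exactly by lowering $m$ by $\tfrac n2(\delta-\varrho)$. The genuinely hard point, as indicated above, is the strongly singular regime of the bad function: at the critical index $T_a$ fails to be a Calderón--Zygmund operator and its kernel does not satisfy the H\"ormander condition, so the bad part cannot be dispatched by $L^1$ kernel estimates alone and the $L^2$ theory must be used in an essential way, through almost-orthogonality adapted to the non-isotropic balls of radius $2^{-\varrho j}$ and a summation that remains convergent exactly at the endpoint.
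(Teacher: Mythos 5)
A preliminary remark: the paper itself does not prove this statement --- it is quoted as background from \'{A}lvarez--Hounie \cite{Hounie} --- so your sketch can only be measured against that cited argument, which descends from Fefferman's weak type $(1,1)$ theorem for strongly singular convolution operators. Your skeleton reproduces its architecture faithfully: Calder\'on--Zygmund decomposition at height $\lambda$, the good part dispatched by the H\"ormander--Hounie $L^2$ theorem (your reduction $m\le-\tfrac n2\max\{\delta-\varrho,0\}$ is correct), Littlewood--Paley blocks with the kernel bound $|K_j(x,y)|\le C_N 2^{j(n+m)}(1+2^{\varrho j}|x-y|)^{-N}$ (also correct, since derivatives falling on the cutoff cost $2^{-j}\le 2^{-\varrho j}$), and the frequency split at $2^j\sim\ell_k^{-1}$ per cube. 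You also correctly diagnose why the argument cannot close cube by cube: at $m=-\tfrac n2(1-\varrho)$ one has $\|K_j(\cdot,y)\|_{L^1_x}\sim 2^{j\frac n2(1-\varrho)}$, so the cancellation estimate for the coarse blocks yields the divergent factor $\ell_k^{-\frac n2(1-\varrho)}$, and the rapid decay of $K_j$ only activates outside the nonisotropic dilate of radius $\ell_k^{\varrho}$, whose total measure $\sum_k\ell_k^{\varrho n}$ is not controlled by $\lambda^{-1}\|f\|_{L^1}$ (take many cubes of one small side length to see this). The blocks with $\ell_k^{-1}\lesssim 2^j\lesssim\ell_k^{-1/\varrho}$ enjoy neither cancellation nor decay --- that is the genuinely strongly singular range, exactly as you say.

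The genuine gap is that the step you yourself call the heart of the matter is asserted rather than proved: ``use the $L^2$ boundedness of the frequency-truncated operators together with the almost-orthogonality of the blocks \dots{} and run the estimate globally'' is a description of the theorem's difficulty, not an argument. Concretely, two things are missing. First, $b_k$ is only in $L^1$, so the truncated operator norm $\sum_{2^j>\ell_k^{-1}}\|T_{a_j}\|_{L^2\to L^2}\lesssim\ell_k^{\frac n2(1-\varrho)}$ cannot be applied to $b_k$ directly; one must first split $b_k$ into a mollified piece (estimated in $L^2$, using $\|\phi_{s}\ast b_k\|_{L^2}\lesssim\|b_k\|_{L^1}s^{-n/2}$) and a remainder with cancellation at the mollification scale handled by kernel estimates --- and the choice of scale is constrained on both sides by exactly the critical exponent, which is where Fefferman's trick does real work. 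Second, the truncation index depends on $k$, so $\sum_k T^{(k)}b_k$ is not one operator applied to $b$; the cross terms in the Chebyshev step do not disappear, and organizing them --- fixing a dyadic scale, summing all cubes of that scale against all frequency blocks, then summing scales, via Cotlar--Stein or local $L^2$ estimates over the intermediate annuli $\ell_k\lesssim|x-y_k|\lesssim\ell_k^{\varrho}$ --- is precisely the content of the \'{A}lvarez--Hounie/Fefferman proof and is where the endpoint $m=-\tfrac n2(1-\varrho)$ is actually consumed. (Your accounting of the loss $\tfrac n2(\delta-\varrho)$ through Calder\'on--Vaillancourt on each block is fine and is indeed how the second term in the hypothesis enters.) As it stands your proposal is a correct and well-informed road map whose decisive estimate is named but not established; to complete it you would need to state and prove the quantitative almost-orthogonality/local-$L^2$ lemma adapted to the balls of radius $2^{-\varrho j}$, or else verify the nonisotropic kernel conditions of the class used in \cite{Hounie} and invoke their weak type theorem for that class.
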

In the special case where $\delta=1$ and $\varrho=0$, Guo et.al. \cite{Guo} construct a symbol $a\in S^{-n}_{0,1}$ such that $T_{a}$ is not weak type (1,1). For results related to the $(H^{1},L^{1})$-estimate and $(L^{\infty},BMO)$-estimate, see Coifman et. al. \cite{Coifman} for $\varrho=\delta=0$, Stein\cite{Stein} for $0<\delta=\varrho<1$ or $0 \leq \delta < \varrho \leq 1$, and Wang\cite{W} or \'{A}lvarez et. al. \cite{Hounie} for $0\leq\varrho<\delta<1$.

The remaining open question concerns endpoint estimates: for $a\in S^{m}_{0,\delta}$ with $0\leq\delta<1$ and $m\leq\frac{n}{2}-\frac{n}{2}\delta$,
is $T_{a}$ of weak type (1,1). While we cannot provide a definitive answer, we can give the result in the weighted case, where the bound on $m$ is sharp.

\begin{thm}\label{Th1}
Let $a\in S^{-n}_{0,\delta}$ with $0\leq\delta<1$ and $\omega\in A_{1}$. Then
\begin{equation*}
\|T_{a}u\|_{L^{1,\infty}_{\omega}}\lesssim\|u\|_{L^{1}_{\omega}}.
\end{equation*}
This result is sharp.
\end{thm}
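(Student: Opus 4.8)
The plan is to combine a Littlewood--Paley decomposition of the symbol in the frequency variable with the weighted Calder\'on--Zygmund machinery, feeding in the $A_1$ hypothesis through both the weighted maximal function and a frequency-localized weighted $L^2$ estimate. First I would fix a dyadic partition of unity $1=\sum_{j\ge0}\psi(\xi/2^{j})$ (with $\psi_0$ supported in $|\xi|\lesssim1$), set $a_j(x,\xi)=a(x,\xi)\psi(\xi/2^{j})$ and $T_j=T_{a_j}$, so that $T_a=\sum_{j\ge0}T_j$. The piece $T_0$ has a symbol with compact $\xi$-support lying in $S^{-n}_{0,\delta}$, so its kernel is bounded and rapidly decaying; $T_0$ is then a (better-than-)Calder\'on--Zygmund operator, and its weighted weak $(1,1)$ bound for $\omega\in A_1$ is classical. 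The whole difficulty is the tail $\sum_{j\ge1}T_j$.

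Next I would record the two kernel estimates that drive everything. Writing $K_j(x,y)=\int e^{i\langle x-y,\xi\rangle}a_j(x,\xi)\,d\xi$, the order $-n$ together with the volume $2^{jn}$ of the support gives the size bound $|K_j(x,y)|\lesssim_N(1+|x-y|)^{-N}$ for every $N$, uniformly in $j$, while differentiating the phase brings down a factor $\xi\sim2^{j}$ and yields $|\partial_yK_j(x,y)|\lesssim_N 2^{j}(1+|x-y|)^{-N}$. I would stress that, because $\varrho=0$, these bounds are localized only at the unit scale and carry \emph{no} decay in $j$, so they cannot be summed by the triangle inequality; this is exactly the obstruction that leaves the unweighted endpoint open. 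The point of the weighted setting is that the missing $j$-decay is supplied by an $L^2$ mechanism: since $-n\le-\tfrac n2\delta$, the frequency pieces obey $\|T_j\|_{L^2_\omega\to L^2_\omega}\lesssim 2^{-\epsilon j}$ for $\omega\in A_1\subset A_2$, with a rate $\epsilon=\epsilon(n,\delta)>0$ that degenerates as $\delta\uparrow1$; summing these gives in particular that $T_a$ is bounded on $L^2_\omega$.

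With these in hand I would run a Calder\'on--Zygmund decomposition of $u$ at height $\lambda$ adapted to Lebesgue measure, $u=g+\sum_k b_k$, with $b_k$ supported on a cube $Q_k$ of side $\ell_k$, mean zero, and $\Omega=\bigcup_k Q_k$; the $A_1$ property yields $\omega(\bigcup_k 2Q_k)\lesssim\lambda^{-1}\|u\|_{L^1_\omega}$, and the weighted $L^2$ bound handles the good part via $\omega\{|T_ag|>\lambda\}\lesssim\lambda^{-2}\|g\|_{L^2_\omega}^2\lesssim\lambda^{-1}\|u\|_{L^1_\omega}$. For the bad part the key is to split the $j$-sum at the scale of each cube. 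When $2^{j}\le\ell_k^{-1}$ I would use $\int b_k=0$ together with the regularity bound, producing a factor $2^{j}\ell_k$ that sums geometrically up to the cutoff, the spatial integrals against $\omega$ being controlled by $A_1$, i.e. by $\inf_{Q_k}\omega\approx\omega(Q_k)/|Q_k|$. When $2^{j}>\ell_k^{-1}$ I would discard the cancellation and instead invoke the weighted $L^2$ decay $2^{-\epsilon j}$, using the unit-scale localization of $K_j$ to keep the relevant region of finite $\omega$-measure; this tail sums as well. Assembling the two regimes over $j$ and $k$ gives the bound on $(\bigcup_k 2Q_k)^{c}$ and completes the weak-type estimate.

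The main obstacle is precisely the high-frequency regime $2^{j}>\ell_k^{-1}$: here the exotic kernel is not concentrated near the diagonal, the mean-zero cancellation of $b_k$ is useless, and one must rely entirely on weighted $L^2$ smoothing. Making that step quantitative --- establishing the frequency-localized weighted $L^2$ estimate with a summable rate and reconciling the unit-scale spread of $T_jb_k$ with the weighted measure --- is the technical heart of the argument, and it is the step that breaks down as $\delta\to1$, in accordance with the counterexample in $S^{-n}_{0,1}$ of \cite{Guo}. Finally, for sharpness I would show that the order $-n$ cannot be raised: for any $m>-n$ I would exhibit a symbol $a\in S^{m}_{0,\delta}$ and a power-type weight $\omega\in A_1$ for which $\|T_au\|_{L^{1,\infty}_\omega}/\|u\|_{L^1_\omega}$ is unbounded, thereby identifying $m=-n$ as the critical index.
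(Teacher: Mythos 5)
Your outline is fine in the regimes where it is routine --- the uniform kernel bounds $|K_j(x,y)|\lesssim_N(1+|x-y|)^{-N}$, $|\partial_yK_j|\lesssim_N 2^j(1+|x-y|)^{-N}$ are correct for $\varrho=0$, the good function is handled correctly, and the low-frequency bad part ($2^j\le\ell_k^{-1}$) works via the mean-zero cancellation. The genuine gap is the high-frequency bad part, exactly the step you call the technical heart. You propose to "invoke the weighted $L^2$ decay $\|T_j\|_{L^2_\omega\to L^2_\omega}\lesssim 2^{-\epsilon j}$" on the pieces $T_jb_k$, but an operator bound on $L^2_\omega$ applied to $b_k$ costs $\|b_k\|_{L^2_\omega}$, and a Calder\'on--Zygmund bad function carries no $L^2$ control whatsoever --- only $\|b_k\|_{L^1}\lesssim\lambda|Q_k|$ and (via $A_1$) $\|b_k\|_{L^1_\omega}\lesssim\int_{Q_k}|u|\omega$. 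The classical repair in such frequency-decomposed weak $(1,1)$ arguments (Fefferman's strongly singular operators) is Plancherel: $\|T_jb_k\|_{L^2}\lesssim\sup_{|\xi|\sim2^j}|a_j|\cdot\|\widehat{b_k}\|_{L^2(|\xi|\sim2^j)}$ with $\|\widehat{b_k}\|_\infty\le\|b_k\|_1$; this has no analogue in $L^2_\omega$, and the pointwise kernel bound, which does weight, carries no $j$-decay at all. There is also a decisive sanity check that something must be missing: nothing in your high-frequency step uses $\delta<1$. Both ingredients of your $2^{-\epsilon j}$ --- the unweighted decay $\|T_j\|_{L^2\to L^2}\lesssim 2^{-jn/2}$ (which follows from $\sup_x|a_j(x,\xi)|\lesssim 2^{-jn}$ alone, by Minkowski, so $\epsilon$ depends on $[\omega]_{A_1}$, not on $\delta$, contrary to your parenthetical) and the $j$-uniform bound $|T_ju|\lesssim Mu$ --- hold verbatim for $\delta=1$ and even for rough symbols. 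Since $\omega\equiv1\in A_1$, your scheme as written would prove unweighted weak type $(1,1)$ for $S^{-n}_{0,1}$, contradicting the counterexample of Guo--Zhu cited in the paper. So the step cannot be completed by the tools you name; some use of the $x$-regularity with $\delta<1$ is indispensable precisely there.

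The paper supplies that missing mechanism and, moreover, does not run a Calder\'on--Zygmund decomposition at all: it proves the pointwise estimate $M^{\sharp}(T_au)(x)\lesssim Mu(x)$ (Proposition \ref{P1}) and concludes via the weak-type Fefferman--Stein inequality together with the weak $(1,1)$ bound for $M$ on $A_1$; this converts everything into averages of $|u|$, so no $L^2$ norm of a bad function ever appears. Inside the sharp-function estimate, the high frequencies on a small cube $Q(x_0,l)$ are tamed by freezing the coefficient: one covers $Q(x_0,l)$ by subcubes of side $l^{\lambda}$, replaces $a(x,\xi)$ by $a(x_i,\xi)$ there --- the frozen symbol is $x$-independent, so the Plancherel/$L^2$ mechanism of Lemma \ref{L2} applies to it --- at the price of an error $2^{j\delta}l^{\lambda}$ from Lemma \ref{L1}, and then iterates over $\lambda=\delta^{-k}$ (Lemma \ref{L3}); the very highest frequencies $2^j>l^{-(n-n^2/N_\delta)/T_\delta}$ are handled by the $L^1$-estimate with a cutoff (Lemma \ref{L4}). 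It is exactly this freezing-and-iteration scheme that consumes the hypothesis $\delta<1$ (the exponents $N_\delta$, $T_\delta$ blow up, respectively degenerate, as $\delta\to1$) and that your proposal has no substitute for. Your sharpness discussion is consistent with the paper's Kurtz--Wheeden-type counterexample, but the main estimate, as proposed, has a genuine hole.
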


Here and below, $A_p$ denotes the Muckenhoupt class. A nonnegative locally integrable function $\omega$ belongs to $A_p$ if there exists a constant $C > 0$ such that
\begin{center}
$\sup\limits_{Q\subset\mathbb{R}^{n}}\big(\frac{1}{|Q|}\int_{Q}\omega(x)dx\big)
\big(\frac{1}{|Q|}\int_{Q}\omega(x)^{\frac{1}{1-p}}dx\big)^{p-1}\leq C$ for $1<p<\infty,$
\end{center}
and for $p=1$
\begin{center}
$M\omega(x)\leq C\omega(x)$ for almost all $x\in\mathbb{R}^{n}.$
\end{center}
For $p=\infty$, one define $A_{\infty}:=\cup_{p>1}A_{p}$. The smallest constant appearing in these inequalities is the $A_{p}$ constant of $\omega$ denoted by $[\omega]_{p}$. The following norm notations are used:
$$\|u\|^{p}_{L^{p}_{\omega}}=\int_{\mathbb{R}^{n}}|u(x)|^{p}\omega(x)dx~\mathrm{and}~
\|u\|^{p}_{L^{p,\infty}_{\omega}}=\sup_{\lambda>0}\lambda^{p}\omega(x\in\mathbb{R}^{n}:|u(x)|>\lambda).$$

Consider the dual operators $T^{*}_{a}$ defined by the formula
\begin{eqnarray}\label{df*}
T^{*}_{a}u(x)
&=&\int_{\mathbb{R}^n}\int_{\mathbb{R}^n} e^{ i \langle x-y,\xi\rangle}a(y,\xi)d\xi u(y) dy.
\end{eqnarray}
Interestingly, one can obtain similar estimates for the operators $T^{*}_{a}$ even without smoothness in the frequency variable $\xi$ of $a(x,\xi)$, i.e. $a\in L^{\infty}S^{-n}_{0}$.
Here $L^{\infty}S^{m}_{\varrho}$ denotes the rough H\"{o}rmander class \cite{Kenig,Wolfgang}where $a(x,\xi)$ obeys
 \begin{equation*}
   \|\partial^{\alpha}_{\xi}a(\cdot,\xi)\|_{L^{\infty}(\mathbb{R}^n)}\leq C_{\alpha}\langle\xi\rangle^{m-\varrho |\alpha|}.
\end{equation*}
Clearly, the inclusion $S^{m}_{\varrho,\delta}\subset L^{\infty}S^{m}_{\varrho}$ holds for any $m\in \mathbb{R}$, $1\leq\varrho,\delta\leq1$.

\begin{thm}\label{Th2}
Let $a\in L^{\infty}S^{-n}_{0}$ and $\omega\in A_{1}$. Then
\begin{equation*}\label{weighted3}
\|T^{*}_{a}u\|_{L^{1,\infty}_{\omega}}\lesssim\|u\|_{L^{1}_{\omega}}.
\end{equation*}
This result is sharp.
\end{thm}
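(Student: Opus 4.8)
The plan is to work directly with the Schwartz kernel of $T^*_a$. Writing $K(x,y)=\int_{\mathbb{R}^n}e^{i\langle x-y,\xi\rangle}a(y,\xi)\,d\xi=k_y(x-y)$, where $k_y$ is the inverse Fourier transform of $a(y,\cdot)$, the decisive feature of the class $L^\infty S^{-n}_0$ is that only the $\xi$-derivatives of $a$ are controlled, uniformly in $y$: this makes $K$ smooth in the output variable $x$ but merely bounded in the rough variable $y$, exactly dual to the situation governing $T_a$ in Theorem \ref{Th1}. I would fix a Littlewood--Paley partition $a=\sum_{j\ge 0}a_j$ with $a_j$ supported in $\langle\xi\rangle\sim 2^j$, set $T^*_a=\sum_j T_j$, and record two per-scale facts: since $a\in L^\infty S^{-n}_0$ one has $|k_y^{(j)}(z)|\lesssim\min(1,|z|^{-N})$ uniformly in $j,y$ and every $N$, so that $|T_ju|\lesssim Mu$ pointwise; and $\|T_j\|_{L^2\to L^2}\lesssim 2^{-jn/2}$, obtained by freezing the amplitude on unit cubes inside the band $\langle\xi\rangle\sim 2^j$ and a $TT^*$/Plancherel argument that never differentiates $a$ in $y$. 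Thus no single frequency is problematic; the entire difficulty is that the superposition over $j$ is only logarithmically divergent, and precisely at the critical order $m=-n$.

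Since $-n<-\tfrac n2$, the bounds $\|T_j\|_{L^2}\lesssim 2^{-jn/2}$ are summable, and together with the weighted $L^2$ theory of rough pseudodifferential operators (applicable here since the order $-n$ lies well below the $L^2$ threshold $-n/2$) this yields boundedness of $T^*_a$ on $L^2(\omega)$ for $\omega\in A_2\supset A_1$. I would then run a Calderón--Zygmund decomposition of $u$ adapted to $\omega$: at height $\lambda$ select the maximal dyadic cubes $Q$ with $\frac{1}{\omega(Q)}\int_Q|u|\,\omega>\lambda$, write $u=g+b$ with $b=\sum_Q b_Q$, $\operatorname{supp}b_Q\subset Q$, each $b_Q$ of mean zero, and use the standard $A_1$ bookkeeping $\omega(\cup 2Q)\lesssim\lambda^{-1}\|u\|_{L^1_\omega}$, $\sum_Q\|b_Q\|_{L^1_\omega}\lesssim\|u\|_{L^1_\omega}$, and $\|g\|_{L^2_\omega}^2\lesssim\lambda\|u\|_{L^1_\omega}$. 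The good part is then disposed of by Chebyshev and the $L^2(\omega)$ bound, $\omega\{|T^*_ag|>\lambda\}\lesssim\lambda^{-2}\|g\|_{L^2_\omega}^2\lesssim\lambda^{-1}\|u\|_{L^1_\omega}$, and everything reduces to showing $\sum_Q\int_{(2Q)^c}|T^*_ab_Q|\,\omega\,dx\lesssim\|u\|_{L^1_\omega}$.

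This bad-part estimate is the main obstacle, precisely because the roughness of $a$ in $y$ makes the usual Hörmander condition in the second kernel variable unavailable: in $K(x,y)-K(x,y_Q)=[k_y(x-y)-k_y(x-y_Q)]+[k_y(x-y_Q)-k_{y_Q}(x-y_Q)]$ the last bracket only obeys $|a(y,\xi)-a(y_Q,\xi)|\lesssim\langle\xi\rangle^{-n}$ with no gain, and its naive contribution diverges logarithmically exactly at $m=-n$. I would therefore avoid any appeal to regularity in $y$: for a cube of side $\ell(Q)$ split the frequencies at $J\sim\log_2(1/\ell(Q))$. For $j\le J$ I would use the mean-zero cancellation of $b_Q$ together with regularity of $k_y^{(j)}$ in its \emph{argument} $z=x-y$ alone (the rough slot $y$ staying fixed), so that $|\nabla_z k_y^{(j)}(z)|\lesssim 2^{j}(1+|z|)^{-N}$ and the geometric sum $\sum_{j\le J}\ell(Q)\,2^{j}$ is $O(1)$ at the critical exponent; for $j>J$ I would discard cancellation and invoke $\|T_j\|_{L^2}\lesssim 2^{-jn/2}$ with its off-diagonal decay, summing over dyadic annuli against $\omega$ via the doubling and $M\omega\lesssim\omega$ properties of $A_1$. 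The delicate point — where I expect the real work to lie — is that both regimes are borderline exactly when $m=-n$, so the logarithm generated by the low frequencies must be absorbed by the $2^{-jn/2}$ decay of the high ones; in particular, in the near-diagonal region just outside $2Q$, where the crude size bound is too lossy against the weight, one must fall back on the weighted $L^2$ almost-orthogonality rather than pointwise kernel bounds. Tracking the $A_1$ constant uniformly across all spatial and frequency regions is what pins $m=-n$ as the critical index.

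For sharpness I would exhibit, for each $m>-n$, a symbol in $L^\infty S^{m}_0$ — a rough analogue of the construction of Guo et al.\ for $\delta=1$ — whose operator fails to map $L^1_\omega$ into $L^{1,\infty}_\omega$, showing that the exponent $-n$ cannot be relaxed.
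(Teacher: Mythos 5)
Your overall route is genuinely different from the paper's, and the difference matters: the paper never runs a Calder\'on--Zygmund decomposition or touches weighted $L^2$ bounds. It proves the pointwise sharp-function estimate $M^{\sharp}(T^{*}_{a}u)(x)\lesssim Mu(x)$ (Proposition \ref{P2}) by exactly the two per-scale inputs you identify --- the uniform kernel bounds with $x$-regularity only, giving the low-frequency contribution $\sum_{2^{j}<l^{-1}}2^{j}l$ (Lemma \ref{L1}), and a Parseval-based estimate giving the high-frequency local average bound $2^{-jn/2}l^{-n/2}$ (Lemma \ref{L2}), plus Lemma \ref{L5} for large cubes --- and then concludes via the Fefferman--Stein inequality $\|Mu\|_{L^{1,\infty}_{\omega}}\lesssim\|M^{\sharp}u\|_{L^{1,\infty}_{\omega}}$ for $\omega\in A_{\infty}$ together with the weak type $(1,1)$ of $M$ for $A_{1}$ weights. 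The point of that architecture is that the weight never interacts with the operator: all the critical-order summation is performed on \emph{unweighted} local averages, which is precisely what makes $m=-n$ tractable.

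Measured against that, your sketch has two genuine gaps, both at the places you yourself flag. First, the good part: weighted $L^{2}(\omega)$ boundedness of $T^{*}_{a}$ for rough symbols at order $-n$ is not an off-the-shelf fact --- in this paper it is a \emph{corollary} of the sharp-function estimate you are trying to avoid, so as written your argument borrows an unproved (near-circular) input. It is repairable (e.g.\ interpolation with change of measure between the uniform bound $|T^{*}_{j}u|\lesssim Mu$, valid in $L^{2}(\omega^{1+\epsilon})$ since $\omega^{1+\epsilon}\in A_{2}$ for small $\epsilon$, and the unweighted decay $2^{-jn/2}$), but you must say so. Second, and more seriously, the high-frequency bad-part estimate does not parse: $b_{Q}$ is only an $L^{1}$ object, so the operator norm $\|T_{j}\|_{L^{2}\to L^{2}}\lesssim 2^{-jn/2}$ yields nothing when applied to $b_{Q}$; the usable bound is the $L^{1}\to L^{2}$ estimate $\|T^{*}_{j}b_{Q}\|_{L^{2}}\lesssim 2^{-jn/2}\|b_{Q}\|_{L^{1}}$ (Parseval --- the mechanism inside Lemma \ref{L2}), and integrating the resulting $L^{2}$ quantity against $\omega$ over annuli by Cauchy--Schwarz requires local control of $\bigl(\int\omega^{2}\bigr)^{1/2}$, which an $A_{1}$ weight does not supply (reverse H\"older gives only some exponent $1+\epsilon$). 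You defer exactly this region to ``where the real work lies,'' but at the critical index $m=-n$ that region \emph{is} the theorem; the pointwise kernel bounds are uniform in $j$ with no decay, so the $j>J$ sum against the weight cannot be closed by the tools you list. Finally, for sharpness you only gesture at a construction; the paper gives an explicit one --- $a(\xi)=e^{i\xi_{1}}\langle\xi\rangle^{m}$ with $-n<m<0$, $u(x)=|x|^{-\alpha}\chi_{|x|<\eta}$, $\omega(x)=|x-x_{0}|^{-\beta}$ with $\alpha+\beta+m>n$, using the Bessel potential asymptotics $G_{-m}(x)\sim|x|^{-n-m}$ near $0$ --- which settles sharpness for both $T_{a}$ and $T^{*}_{a}$ at once.
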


The study of weighted $L^{p}$-estimates for pseudo-differential operators has been an active area of research, particularly in the 1980s \cite{Chanillo, MiyachiY, Journe, Hounie}, and has seen further improvements by Michalowski et. al. \cite{Michalowski, Michalowski1} in the late, with more recent work by Wang \cite{CW,W} and Park \cite{Park,Park1}. These results can be summarized as follows:
\begin{thm}[Wang \cite{W}]\label{W}
Let $0\leq\varrho\leq1,$ $0\leq\delta<1$, $1\leq r\leq2$ and $a(x,\xi)\in S^{-\frac{n}{r}(1-\varrho)}_{\varrho,\delta}$. Suppose $\omega\in A_{p/r}$ with $r<p<\infty$. Then there is a constant $C$ independent of $a$ and $u$, such that
\begin{equation}\label{weighted}
\|T_{a}u\|_{L^{p}_{\omega}}\leq C\|u\|_{L^{p}_{\omega}}.
\end{equation}
\end{thm}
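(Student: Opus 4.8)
The plan is to deduce the weighted bound from a single pointwise inequality and then invoke the standard weighted Calder\'{o}n--Zygmund machinery. Write $M$ for the Hardy--Littlewood maximal operator, $M_r f:=\big(M(|f|^r)\big)^{1/r}$ for the $r$-th maximal operator, and $M^{\#}$ for the Fefferman--Stein sharp maximal operator. The heart of the argument is the pointwise estimate
\begin{equation}
M^{\#}(T_a u)(x)\lesssim M_r u(x),\qquad x\in\mathbb{R}^n,\tag{$\ast$}
\end{equation}
after which the theorem follows from general principles. This reduction is natural precisely because the hypotheses involve the exponent $r$ and the class $A_{p/r}$.

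Granting $(\ast)$, I would argue as follows. Since $r<p$ we have $p/r>1$, so for $\omega\in A_{p/r}$ the operator $M$ is bounded on $L^{p/r}_{\omega}$; hence $M_r$ is bounded on $L^p_{\omega}$ because
\begin{equation*}
\|M_r u\|_{L^p_{\omega}}^p=\|M(|u|^r)\|_{L^{p/r}_{\omega}}^{p/r}\lesssim\big\||u|^r\big\|_{L^{p/r}_{\omega}}^{p/r}=\|u\|_{L^p_{\omega}}^p.
\end{equation*}
Moreover $A_{p/r}\subset A_{\infty}$, so the weighted Fefferman--Stein inequality $\|g\|_{L^p_{\omega}}\lesssim\|M^{\#}g\|_{L^p_{\omega}}$ is available (applied first to $u$ in a dense subclass, e.g. Schwartz functions, to guarantee the a priori finiteness $M(T_a u)\in L^p_{\omega}$, and then extended by density). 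Combining these two facts with $(\ast)$ yields
\begin{equation*}
\|T_a u\|_{L^p_{\omega}}\lesssim\|M^{\#}(T_a u)\|_{L^p_{\omega}}\lesssim\|M_r u\|_{L^p_{\omega}}\lesssim\|u\|_{L^p_{\omega}},
\end{equation*}
which is exactly \eqref{weighted}; the two constraints $p>r$ and $\omega\in A_{p/r}$ are used, and only used, at this step.

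To prove $(\ast)$ I would fix $x$ and a ball $B=B(x_0,\rho)\ni x$, and split $u=u_1+u_2$ with $u_1=u\chi_{2B}$ and $u_2=u\chi_{(2B)^c}$. The local part requires the $L^r$-boundedness of $T_a$, which I would obtain by interpolating Hörmander's $L^2$ bound (valid here, since $-\tfrac{n}{r}(1-\varrho)\le-\tfrac{n}{2}\max\{\delta-\varrho,0\}$ when $r\le2$ and $\delta<1$) against the weak-type $(1,1)$ endpoint of \'{A}lvarez and Hounie; the gain in order is just sufficient because $1-\varrho\ge\max\{\delta-\varrho,0\}$. H\"{o}lder's inequality then gives $\frac{1}{|B|}\int_B|T_a u_1|\lesssim\big(\frac{1}{|2B|}\int_{2B}|u|^r\big)^{1/r}\lesssim M_r u(x)$. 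For the tail I would take the averaging constant $c_B=T_a u_2(x_0)$ and control the oscillation $\frac{1}{|B|}\int_B|T_a u_2(y)-T_a u_2(x_0)|\,dy$ by H\"{o}lder in the annuli $A_k=\{2^k\rho\le|z-x_0|<2^{k+1}\rho\}$, which reduces matters to the $L^{r'}$-Hörmander condition on the kernel $K(x,z)=\int e^{i\langle x-z,\xi\rangle}a(x,\xi)\,d\xi$ (with $1/r+1/r'=1$):
\begin{equation*}
\sum_{k\ge1}(2^k\rho)^{n/r}\Big(\int_{2^k\rho\le|z-x_0|<2^{k+1}\rho}|K(y,z)-K(x_0,z)|^{r'}\,dz\Big)^{1/r'}\lesssim1,
\end{equation*}
uniformly for $y\in B(x_0,\rho)$.

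The main obstacle is verifying this kernel condition. I would decompose the symbol by a Littlewood--Paley partition $a=\sum_{j\ge0}a_j$ with $a_j$ supported where $\langle\xi\rangle\sim2^j$, giving kernels $K_j$. Nonstationary phase (integration by parts in $\xi$, each step gaining a factor $2^{-j\varrho}|x-z|^{-1}$) yields the off-diagonal decay $|K_j(x,z)|\lesssim_N2^{j(m+n)}(2^{j\varrho}|x-z|)^{-N}$ for $|x-z|\gtrsim2^{-j\varrho}$, while differentiating in $x$ produces the amplitude $i\xi a_j+\nabla_x a_j$; here the condition $\delta<1$ is essential, as it makes the $x$-derivative term $\nabla_x a_j$ (order $m+\delta$) of strictly lower order than the phase term $\xi a_j$ (order $m+1$), so the kernel regularity stays at the same order as in the case $\delta=0$. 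The delicate point is the summation: one must split each $A_k$ according to whether $2^{j\varrho}2^k\rho\lessgtr1$, using the regularity factor $\min\{1,\rho\,2^j\}$ for frequencies below the diagonal threshold and the raw rapid decay above it. The exponent $m=-\tfrac{n}{r}(1-\varrho)$ is calibrated exactly so that, after taking $L^{r'}$-norms over the annuli and weighting by $(2^k\rho)^{n/r}$, the resulting double series in $j$ and $k$ converges to a bound independent of $\rho$ and $x_0$. Assembling the local and tail estimates gives $(\ast)$ and completes the proof.
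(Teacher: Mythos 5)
Your outer reduction is the right skeleton: the paper states Theorem \ref{W} without proof (it is quoted from Wang \cite{W}), but both \cite{W} and this paper's own Section 2 proceed exactly as you do in the first half --- a pointwise sharp-function bound ($M^{\sharp}(T_a u)\lesssim M_r u$, with $r=1$ in the paper's Propositions \ref{P1}--\ref{P2}), followed by the weighted Fefferman--Stein inequality and the $L^{p/r}_{\omega}$-boundedness of $M$ for $\omega\in A_{p/r}$. Your local-part estimate is also essentially fine, although the interpolation needs care: for $\varrho=0$, $r$ close to $2$ and $\delta$ close to $1$ one has $-\tfrac{n}{r}>-\tfrac{n}{2}(1+\delta)$, so the fixed symbol need not lie in the \'{A}lvarez--Hounie weak-$(1,1)$ class, and you must embed $a$ in an analytic family $a_z=\langle\xi\rangle^{s(z)}a$ rather than interpolate bounds for the single operator $T_a$; with that fix the interpolated order $-\tfrac{n}{r}+\tfrac{n}{2}(1-\delta)(1-\varrho)$ indeed clears $-\tfrac{n}{r}(1-\varrho)$ because $\delta<1$.

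The genuine gap is the tail estimate. Your $L^{r'}$-H\"{o}rmander kernel condition is false when $\varrho=0$, a case the theorem includes and the very case for which it is invoked in this paper. Quantitatively: your dyadic bound $|K_j(x,z)|\lesssim_N 2^{j(m+n)}(2^{j\varrho}|x-z|)^{-N}$ carries no $j$-decay at $\varrho=0$, and with $m=-\tfrac{n}{r}$ one has $m+n=n(1-\tfrac1r)\geq 0$, so $\sum_j |K_j|$ diverges for every fixed $|x-z|$; even at $r=1$, inserting your regularity factor $\min\{1,\rho 2^j\}$ leaves the divergent high-frequency tail $\sum_{2^j>\rho^{-1}}1$. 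Structurally, $T_a$ is not pseudo-local at $\varrho=0$: the paper's own counterexample symbol $a(x,\xi)=e^{i\xi_1}\langle\xi\rangle^{-n/r}\in S^{-n/r}_{0,0}$ has kernel $K(x,z)=G_{n/r}(x-z+e_1)$ with an off-diagonal singularity along $z=x+e_1$, where $|K(y,z)|^{r'}\sim|y-z+e_1|^{-n}$ is non-integrable; whenever this set meets an annulus $A_k$ the integral $\int_{A_k}|K(y,z)-K(x_0,z)|^{r'}dz$ is infinite (the singularities of the two terms sit at different $z$, so no cancellation helps). This is precisely the obstruction the paper flags at the start of Section 2: for $\varrho=0$ the Calder\'{o}n--Zygmund method tied to kernel regularity off the diagonal is not applicable. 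The actual proofs replace kernel decay by \emph{local averaged} estimates on each Littlewood--Paley piece $T_j$: an $L^1$-bound for the near part, Plancherel/$L^2$-boundedness for averages over cubes (as in Lemma \ref{L2}), symbol-freezing on subcubes of side $l^{\lambda}$ (Lemma \ref{L3}), and cube-size-dependent frequency thresholds such as $2^j\lessgtr l^{-1}$, $l^{-1/\delta^k}$ (Lemmas \ref{L4}, \ref{L5}), with the convergence of the $j$-sum coming from these scale splittings rather than from any $j$-uniform pointwise decay in $|x-z|$. Your argument as written proves the theorem only for $\varrho>0$; for $\varrho=0$ the tail step must be rebuilt along these lines.
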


The case $r=2$, we refer Chanillo et.al \cite{Chanillo} for $0\leq\delta<\varrho<1$, Miyachi et.al \cite{MiyachiY} for $0<\delta=\varrho<1$ and Michalowski et.al\cite{Michalowski1} for $0<\varrho<\delta<1$. In the case $r=1$ and $\varrho\neq0$, we refer  Michalowski et.al\cite{Michalowski}. In the case $1\leq r\leq2$ and $0<\delta\leq\varrho<1$, we refer Chen \cite{Chen} and Park \cite{Park,Park3}. Notably, the case $p=r=1$ has not been fully explored in the existing literature. Our results address this gap for the case $p=r=1$ and $\varrho=0.$

The remainder of the paper is organized as follows: Section 2 introduces the necessary preliminaries and contains proofs of the main results, while Section 3 show Counterexamples and some applications to weighted estimates for certain classes of Fourier integral operators.
\allowdisplaybreaks

\section{The proof of pointwise estimate for the sharp maximal function}
 When $\varrho>0$, PDOs with symbols in $S^{m}_{\varrho,\delta}$ are pseudo-local, meaning their kernels are smooth away from the diagonal.
However, when $\varrho=0$, these operators are no longer pseudo-local. This departure implies that the Calder\'{o}n-Zygmund composition method is no longer directly applicable to $\varrho=0$. Nonetheless, it has been found that $T_{a}$ is bounded on $L^{1}$ with a suitable order. This property and $L^{2}$-estimate for PDOs allow these operators to enjoy local estimates after applying a dyadic partition of unity. Consequently, the local estimate guarantees the convergence of the resulting series.

We are going to get a pointwise estimate for PDOs with respect to Fefferman-Stein sharp maximal
function and Hardy-Littlewood maximal function.
\begin{prop}\label{P1}
If $a\in S^{-n}_{0,\delta}$ with $0\leq\delta<1$, then
\begin{equation*}\label{MM}
M^{\sharp}(T_{a}f)(x)\lesssim Mf(x).
\end{equation*}
\end{prop}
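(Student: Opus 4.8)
The plan is to control the mean oscillation of $T_af$ over an arbitrary cube and bound it by the Hardy--Littlewood maximal function. Fix a cube $Q$ with centre $x_0$ and side length $r$, and set $Q^*=2\sqrt{n}\,Q$. Writing $f=f_1+f_2$ with $f_1=f\chi_{Q^*}$ and $f_2=f\chi_{(Q^*)^c}$, I would choose the subtracted constant to be $c_Q=T_af_2(x_0)$ and estimate
\[
\frac{1}{|Q|}\int_Q|T_af(y)-c_Q|\,dy\le\frac{1}{|Q|}\int_Q|T_af_1(y)|\,dy+\frac{1}{|Q|}\int_Q|T_af_2(y)-T_af_2(x_0)|\,dy.
\]
The first term is the local part and the second the far part; taking the supremum over all cubes $Q\ni x$ then yields the claimed sharp-maximal bound.

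For the local term I would invoke the $L^1$-boundedness of $T_a$, available precisely because the smoothing order equals the critical value $-n$, to get
\[
\frac{1}{|Q|}\int_Q|T_af_1|\le\frac{1}{|Q|}\|T_af_1\|_{L^1}\lesssim\frac{1}{|Q|}\|f_1\|_{L^1}=\frac{1}{|Q|}\int_{Q^*}|f|\lesssim Mf(x),
\]
for every $x\in Q$, since $|Q^*|\simeq|Q|$. It is essential to use the $L^1$ bound here rather than the $L^2$ one: Cauchy--Schwarz together with $L^2$-boundedness would only produce $(M(|f|^2))^{1/2}$, whereas the statement demands the first-order maximal function $Mf$.

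The far term is where the failure of pseudo-locality at $\varrho=0$ must be confronted, since the kernel of $T_a$ is not smooth off the diagonal and the classical H\"ormander regularity condition is unavailable. Here I would decompose $a=\sum_{j\ge0}a_j$ by a dyadic partition of unity in $\xi$, with $a_j$ supported in $|\xi|\simeq2^j$, and split the sum at the scale $2^{j}\simeq r^{-1}$. For the low frequencies $2^j\lesssim r^{-1}$ the symbol varies slowly, so I would estimate $T_{a_j}f_2(y)-T_{a_j}f_2(x_0)$ by the mean value theorem in the $x$-variable, the $x$-derivative producing a factor controlled by $r2^{j}\le1$ together with the gain $2^{j\delta}$ coming from the class $S^{-n}_{0,\delta}$; because $\delta<1$ the resulting geometric series in $j$ converges. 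For the high frequencies $2^j\gtrsim r^{-1}$, where no useful smoothness in $y$ survives, I would instead rely on $L^1$ estimates (equivalently $L^2$ estimates exploiting the $-n$ smoothing) for the individual pieces $T_{a_j}$ and on the disjointness of the support of $f_2$ from $Q$, organizing $f_2$ into the dyadic annuli $2^kr\le|z-x_0|<2^{k+1}r$. The crucial point is that each kernel piece $K_j$ obeys $\int|K_j(x,y)|\,dy\lesssim1$ with constants that are summable in $j$ at the critical order $m=-n$, so that summing over both $j$ and the annular index $k$ produces a convergent series multiplied by $Mf(x)$.

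The main obstacle is exactly this non-pseudo-local behaviour: for $\varrho=0$ repeated integration by parts in $\xi$ yields no decay in $j$, so the kernel pieces are not concentrated near the diagonal and one cannot estimate $\int|K(y,z)-K(x_0,z)|\,dz$ pointwise. The decomposition replaces this pointwise regularity by $L^1$ and $L^2$ bounds on the dyadic pieces, and the convergence of the resulting double series over frequency $j$ and spatial annulus $k$ hinges delicately on the smoothing order being exactly $-n$ (for summability in $j$) and on $\delta<1$ (for the $x$-regularity gain in the low-frequency range). Reconciling these local bounds with the first-order target $Mf$, rather than the weaker $(M(|f|^2))^{1/2}$ that a purely $L^2$ argument would give, is the subtle part, and is what makes the $L^1$-boundedness of $T_a$ and the associated $L^1$ kernel estimates indispensable.
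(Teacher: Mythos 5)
Your overall skeleton (split $f=f_1+f_2$ on a dilate of $Q$, local part by $L^1$-boundedness, far part by a dyadic frequency decomposition split at $2^j\simeq r^{-1}$) is the classical scheme, and parts of it do match the paper: your low-frequency mean-value estimate giving $r2^j$ per piece is exactly the paper's Lemma 2.8, and the local term via the global $L^1$ bound is legitimate, since $-n<-\tfrac n2(1+\delta)$ strictly when $\delta<1$. But the high-frequency regime, which is the heart of the matter, has a genuine gap. Your stated mechanism --- ``each kernel piece $K_j$ obeys $\int|K_j(x,y)|\,dy\lesssim1$ with constants that are summable in $j$'' --- is self-contradictory: those kernel-mass bounds are \emph{uniform} in $j$ (each $a_j\in S^{-n}_{0,\delta}$ uniformly), not summable, and no amount of integration by parts in $\xi$ improves them when $\varrho=0$. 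The actual decay in the paper comes from an $L^2$ argument (Lemma 2.9): writing $K_j(x,y)=T_{c_j}h_j(x-y)$ with $c_j=a_j|\xi|^n$ and $\widehat{h_j}=|\xi|^{-n}\chi_j$, one gets the bound $(2^jl)^{-n/2}$, summable over $2^j\geq l^{-1}$. Crucially, this requires $T_{c_j}$ to be $L^2$-bounded uniformly, i.e. $c_j\in S^{-\frac n2\delta}_{0,\delta}$ (the H\"ormander--Hounie critical order), which at symbol order $-n$ holds \emph{only when} $\delta=0$; for $0<\delta<1$ one has merely $c_j\in S^{0}_{0,\delta}$, the $L^2$ estimate loses a factor $2^{jn\delta/2}$, and the sum diverges. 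Your proposal offers no substitute for this loss, so as written it proves the proposition only for $\delta=0$.

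This is precisely why the paper's proof is as elaborate as it is for $0<\delta<1$: it covers $Q(x_0,l)$ by subcubes of side $l^{\lambda}$ and compares $T_j$ with the frozen-coefficient operators $T_{j,i}$ (symbol $a(x_i,\xi)\psi(2^{-j}\xi)$, effectively $\delta=0$, so the $L^2$ machinery applies), balancing the freezing error $l^{\lambda}2^{j\delta}$ against the main term (Lemma 2.10); it then needs a separate tail estimate for very high frequencies $2^j\geq l^{-(n-n^2/N_\delta)/T_\delta}$ via an $L^1$ bound on the pieces plus integration by parts (Lemma 2.11); and in the proof it iterates Lemma 2.10 over the geometric ladder of exponents $\lambda=\delta^{-k}$, $k=0,\dots,\gamma-1$, to bridge the middle range of frequencies. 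None of this appears in your proposal, and it is not a technicality one can wave through: at the exact order $m=-n$ with $\delta>0$ there is no uniform single-scale estimate that is summable in $j$, so some multi-scale freezing or interpolation device is indispensable. (Two smaller points: the factor $2^{j\delta}$ in your low-frequency step is a \emph{loss} from the $x$-derivative of the symbol, harmless only because $\delta<1$, not a gain; and your scheme should also say something about cubes with $l\geq1$, where there are no low frequencies and the paper needs a separate argument, Lemma 2.12, with the cutoff at scale $l2^{j\epsilon}$.)
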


\begin{prop}\label{P2}
If $a\in L^{\infty}S^{-n}_{0}$ then
$$M^{\sharp}(T^{*}_{a}f)(x)\lesssim Mf(x).$$
\end{prop}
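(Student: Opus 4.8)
The goal is to prove the pointwise sharp maximal function estimate $M^{\sharp}(T^{*}_{a}f)(x)\lesssim Mf(x)$ for $a\in L^{\infty}S^{-n}_{0}$, paralleling Proposition~\ref{P1} but now for the dual operator and under the weaker rough-symbol hypothesis. Recall that $M^{\sharp}g(x)=\sup_{Q\ni x}\frac{1}{|Q|}\int_{Q}|g(y)-g_{Q}|\,dy$, so it suffices to fix a cube $Q$ centered at some $x_{0}$ with side length comparable to a radius $R$, choose a suitable constant $c_{Q}$, and bound $\frac{1}{|Q|}\int_{Q}|T^{*}_{a}f(y)-c_{Q}|\,dy \lesssim Mf(x)$ uniformly. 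The plan is to split $f=f_{1}+f_{2}$ where $f_{1}=f\chi_{2Q}$ is the local part and $f_{2}=f\chi_{(2Q)^{c}}$ is the far part, and treat each piece separately; the constant $c_{Q}$ will be taken as $T^{*}_{a}f_{2}(x_{0})$ so that the far part is controlled by its own oscillation.

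For the local part, the strategy is to exploit the $L^{2}$-boundedness of $T^{*}_{a}$. Since $a\in L^{\infty}S^{-n}_{0}\subset L^{\infty}S^{0}_{0}$, the dual operator is bounded on $L^{2}$, and by Jensen's inequality together with this $L^{2}$ bound one estimates
\begin{equation*}
\frac{1}{|Q|}\int_{Q}|T^{*}_{a}f_{1}(y)|\,dy
\le\Big(\frac{1}{|Q|}\int_{Q}|T^{*}_{a}f_{1}(y)|^{2}\,dy\Big)^{1/2}
\lesssim\Big(\frac{1}{|Q|}\int_{2Q}|f(y)|^{2}\,dy\Big)^{1/2},
\end{equation*}
which already exceeds $Mf(x)$ unless one is careful; the correct route, following the $\varrho=0$ philosophy described in the section preamble, is to apply a dyadic (Littlewood--Paley) decomposition $a=\sum_{j}a_{j}$ in the frequency variable and use the resulting local $L^{1}\to L^{1}$ and $L^{2}\to L^{2}$ bounds with summable constants, so that after summation one controls the local term by $Mf(x)$ directly rather than by the $L^{2}$ average. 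The key input is the decay $\langle\xi\rangle^{-n}$ in the symbol, which produces a kernel whose relevant norms over dyadic annuli form a convergent series.

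For the far part, the main task is a kernel estimate. Writing $T^{*}_{a}f_{2}(y)=\int K(y,z)f_{2}(z)\,dz$ with $K(y,z)=\int e^{i\langle y-z,\xi\rangle}a(z,\xi)\,d\xi$ (noting that in the dual operator the symbol is evaluated at the \emph{integration} variable $z$), I would estimate the difference $|K(y,z)-K(x_{0},z)|$ for $y\in Q$ and $z\in(2Q)^{c}$. Crucially, for the dual operator the symbol's smoothness enters only through $\xi$-derivatives, so the oscillation in $y$ is handled by integrating by parts in $\xi$ against the factor $e^{i\langle y-z,\xi\rangle}$ and using $\|\partial^{\alpha}_{\xi}a(\cdot,\xi)\|_{L^{\infty}}\lesssim\langle\xi\rangle^{-n-|\alpha|}$; this is precisely why no regularity in the spatial variable is needed and why the hypothesis $a\in L^{\infty}S^{-n}_{0}$ suffices. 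The resulting bound should take the form $|K(y,z)-K(x_{0},z)|\lesssim |y-x_{0}|\,|z-x_{0}|^{-n-1}$ (or a comparable Hörmander-type kernel regularity estimate with the extra decay coming from the order $-n$), which upon integration against $f_{2}$ over the dyadic annuli $2^{k+1}Q\setminus 2^{k}Q$ yields a geometrically summable series bounded by $Mf(x)$.

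The main obstacle will be making the local-part estimate genuinely yield $Mf(x)$ rather than a larger $L^{2}$ average, since a crude application of $L^{2}$-boundedness overshoots. The resolution lies in the dyadic decomposition argument sketched above: one must verify that the pieces $T^{*}_{a_{j}}$ obey $L^{1}$ bounds with constants summable in $j$ (this is where the order $-n$, rather than a larger order, is essential and sharp), and that these combine with the $L^{2}$ bounds to give the clean $Mf(x)$ control. The far-part kernel estimate is more routine but requires care with the placement of the symbol at the variable $z$; otherwise the integration-by-parts bookkeeping is standard, and convergence of the annular series is immediate from the $|z-x_{0}|^{-n-1}$ decay.
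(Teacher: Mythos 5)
The decisive gap is in your far-part kernel estimate. You invoke $\|\partial^{\alpha}_{\xi}a(\cdot,\xi)\|_{L^{\infty}}\lesssim\langle\xi\rangle^{-n-|\alpha|}$, but that is the class $L^{\infty}S^{-n}_{1}$; the hypothesis here is $L^{\infty}S^{-n}_{0}$, for which $\xi$-derivatives give \emph{no} gain in decay: $\|\partial^{\alpha}_{\xi}a(\cdot,\xi)\|_{L^{\infty}}\lesssim\langle\xi\rangle^{-n}$ for every $\alpha$. Consequently integration by parts in $\xi$ cannot produce a H\"{o}rmander-type regularity bound such as $|K(y,z)-K(x_{0},z)|\lesssim |y-x_{0}|\,|z-x_{0}|^{-n-1}$; indeed the kernel is not even given by an absolutely convergent integral (since $\int_{\mathbb{R}^{n}}\langle\xi\rangle^{-n}d\xi=\infty$), and, as the paper stresses at the start of Section 2, operators with $\varrho=0$ are not pseudo-local, so no Calder\'{o}n--Zygmund kernel argument applies. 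Even after frequency localization, integration by parts only yields $|K^{*}_{j}(y,z)|\lesssim_{N}|y-z|^{-2N}$ \emph{uniformly} in $j$ (the factor $\int_{|\xi|\sim 2^{j}}\langle\xi\rangle^{-n}d\xi\sim 1$ gives no smallness), so these bounds do not sum over $j$, your annular geometric series never materializes, and the choice $c_{Q}=T^{*}_{a}f_{2}(x_{0})$ loses its purpose. A side remark: your preliminary claim that $a\in L^{\infty}S^{0}_{0}$ forces $L^{2}$-boundedness of $T^{*}_{a}$ is also unjustified for rough symbols — for $\varrho=0$ one needs order strictly below $-n/2$ — though you abandon that route yourself.

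What replaces the missing kernel decay in the paper is a threshold splitting in frequency relative to the side length $l$ of the cube, with two different mechanisms. For $l<1$ the sum $\sum_{j}|Q|^{-2}\int_{Q}\int_{Q}|T^{*}_{j}u(x)-T^{*}_{j}u(y)|\,dy\,dx$ is split at $2^{j}\sim l^{-1}$: for $2^{j}<l^{-1}$ the kernel-difference (oscillation) bound of Lemma \ref{L1} gives $2^{j}l\,Mu(x_{0})$, summing to $\lesssim Mu(x_{0})$; for $2^{j}>l^{-1}$ Lemma \ref{L2} gives the average bound $2^{-jn/2}l^{-n/2}Mu(x_{0})$, whose proof is \emph{not} a pointwise kernel estimate but a Cauchy--Schwarz/Parseval argument writing $K^{*}_{j}$ through $\widehat{h_{j}}=|\xi|^{-n}\chi_{j}$, so that $\|h_{j}\|_{L^{2}}\sim 2^{-jn/2}$ — this is exactly where the critical order $-n$ enters, and it is the correct substitute for the decay you assumed. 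For $l\geq1$, Lemma \ref{L5} enlarges the local region to $\Gamma=l2^{j\epsilon}$, making the integration-by-parts remainder $2^{-j\epsilon(N-n)}l^{-(N-n)}$ summable while the local piece is handled by the summable blockwise $L^{1}$ bounds coming from $a_{j}\in L^{\infty}S^{-n/2-\theta_{1}}_{0}$ with constants $\lesssim 2^{-j(n/2-\theta_{1})}$. Your local-part plan matches this last mechanism and is sound in spirit; it is the far part, for which you have no valid tool at high frequencies, that needs the Parseval-based Lemma \ref{L2} rather than a uniform kernel condition.
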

Then, Theorem \ref{Th1} and Theorem \ref{Th2} follows from the famous Fefferman-Stein's inequalities \cite{Stein1}:
$$\|Mu\|_{L^{p}_{\omega}}\lesssim \|M^{\sharp}u\|_{L^{p}_{\omega}},\quad \|Mu\|_{L^{p,\infty}_{\omega}}\lesssim \|M^{\sharp}u\|_{L^{p,\infty}_{\omega}}$$
for $0<p<\infty$ and $\omega\in A_{\infty}$.
Moreover, one can get the following results:
\begin{thm}
Let $a\in S^{-n}_{0,\delta}$ with $0\leq\delta<1$ and $\omega\in A_{p}$ with $1<p<\infty$. Then
\begin{center}
$\|T_{a}u\|_{L^{p}_{\omega}}\lesssim\|u\|_{L^{p}_{\omega}}.$
\end{center}
\end{thm}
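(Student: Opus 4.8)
The plan is to obtain this strong-type weighted bound from Proposition~\ref{P1} in exactly the way the weak-type endpoints were obtained from it, simply replacing the weak-type Fefferman--Stein inequality by its strong-type counterpart. Since $\omega\in A_p$ forces $\omega\in A_\infty$, and since $1<p<\infty$, every ingredient I need is at hand: the pointwise sharp-maximal estimate of Proposition~\ref{P1}, the Fefferman--Stein inequality quoted in the excerpt, and the classical Muckenhoupt theorem that $M$ is bounded on $L^p_\omega$ for $\omega\in A_p$. Concretely, I would run the chain
\begin{equation*}
\|T_a u\|_{L^p_\omega}\le\|M(T_a u)\|_{L^p_\omega}\lesssim\|M^{\sharp}(T_a u)\|_{L^p_\omega}\lesssim\|Mu\|_{L^p_\omega}\lesssim\|u\|_{L^p_\omega},
\end{equation*}
where the first inequality uses $|g|\le Mg$ almost everywhere (Lebesgue differentiation), the second is the strong-type Fefferman--Stein inequality with $\omega\in A_\infty$, the third is Proposition~\ref{P1}, and the fourth is the Muckenhoupt maximal theorem.

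Before running this chain I would restrict to a dense subclass of $L^p_\omega$ on which all quantities are a priori finite, say the bounded functions of compact support. For such $u$ one has $u\in L^2$; and because $a\in S^{-n}_{0,\delta}\subset S^{-n\delta/2}_{0,\delta}$, the H\"{o}rmander--Hounie $L^2$ theory recalled in the introduction gives $T_a u\in L^2$. In particular $T_a u\in L^1_{\mathrm{loc}}$, so that $M^{\sharp}(T_a u)$ is well defined and the pointwise domination $|T_a u|\le M(T_a u)$ is legitimate almost everywhere. This membership is precisely what is needed to invoke the Fefferman--Stein inequality, whose conclusion is otherwise empty.

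Having established $\|T_a u\|_{L^p_\omega}\lesssim\|u\|_{L^p_\omega}$ on this dense subclass with a constant depending only on $n$, $p$, $\delta$, $[\omega]_p$ and finitely many seminorms of $a$, the proof concludes with a standard density and extension argument: the uniform bound propagates $T_a$ to a bounded operator on all of $L^p_\omega$.

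The principal obstacle is the a priori finiteness required to apply Fefferman--Stein. The inequality $\|Mf\|_{L^p_\omega}\lesssim\|M^{\sharp}f\|_{L^p_\omega}$ may be used only once the left-hand side is known to be finite; securing this on a suitable dense class by means of the $L^2$ boundedness of $T_a$, and then passing to the limit, is the single step that demands genuine care. The remaining inequalities are immediate applications of results already available in the excerpt.
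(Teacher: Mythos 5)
Your proposal is correct and follows essentially the same route as the paper, which deduces this theorem directly from Proposition~2.1 together with the Fefferman--Stein inequality $\|Mu\|_{L^p_\omega}\lesssim\|M^{\sharp}u\|_{L^p_\omega}$ for $\omega\in A_\infty$ and the Muckenhoupt maximal theorem. Your added care about a priori finiteness (working first on a dense class where $T_au\in L^2$ via the H\"ormander--Hounie $L^2$ theory, then extending by density) is a standard technical point that the paper leaves implicit, so there is no substantive difference in approach.
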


\begin{thm}
Let $a\in L^{\infty}S^{-n}_{0}$ and $\omega\in A_{p}$ with $1<p<\infty$. Then
\begin{center}
$\|T^{*}_{a}u\|_{L^{p}_{\omega}}\lesssim\|u\|_{L^{p}_{\omega}}.$
\end{center}
\end{thm}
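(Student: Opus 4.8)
The plan is to establish the weighted $L^p$ bound for $T_a^*$ by combining Proposition \ref{P2} with the Fefferman--Stein sharp maximal function inequality, exactly as the surrounding text suggests. The main structural idea is that the sharp maximal function $M^\sharp$ controls $T_a^*$ pointwise by the Hardy--Littlewood maximal function $Mf$, and then the two-sided maximal function estimate transfers this into a norm bound.

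First I would verify the reduction. Given $\omega \in A_p$ with $1 < p < \infty$, the class $A_p \subset A_\infty$, so the Fefferman--Stein inequality $\|Mu\|_{L^p_\omega} \lesssim \|M^\sharp u\|_{L^p_\omega}$ applies. The delicate point requiring attention is a standard \emph{a priori} assumption: this inequality is only directly usable once we know $\|Mu\|_{L^p_\omega}$ is finite, so I would first restrict to a dense class (say $u$ a bounded compactly supported function, or a Schwartz function) for which $T_a^* u$ and hence $M(T_a^* u)$ lie in $L^p_\omega$, and then extend by density at the end.

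Next I would chain the estimates. Applying Proposition \ref{P2} pointwise gives $M^\sharp(T_a^* f)(x) \lesssim Mf(x)$, and since for suitable $u$ one has $|T_a^* u(x)| \le M(T_a^* u)(x)$ almost everywhere, the computation proceeds as
\begin{equation*}
\|T_a^* u\|_{L^p_\omega} \le \|M(T_a^* u)\|_{L^p_\omega} \lesssim \|M^\sharp(T_a^* u)\|_{L^p_\omega} \lesssim \|Mu\|_{L^p_\omega} \lesssim \|u\|_{L^p_\omega},
\end{equation*}
where the first inequality is the Lebesgue differentiation estimate $|g| \le Mg$, the second is Fefferman--Stein, the third is Proposition \ref{P2}, and the final step is the classical boundedness of the Hardy--Littlewood maximal operator on $L^p_\omega$ for $\omega \in A_p$, $1 < p < \infty$.

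The hard part is not really analytic here, since Proposition \ref{P2} does all the heavy lifting; rather, the main obstacle is bookkeeping the \emph{a priori} finiteness and the density argument so that the Fefferman--Stein inequality is legitimately invoked. One must ensure that $T_a^* u \in L^p_\omega$ for $u$ in the dense subclass, which follows because $a \in L^\infty S^{-n}_0$ makes the kernel integrable enough to guarantee local integrability of $T_a^* u$, and then confirm that the resulting uniform bound passes to the limit. The companion theorem for $T_a$ with $a \in S^{-n}_{0,\delta}$ is proved identically, replacing Proposition \ref{P2} by Proposition \ref{P1}.
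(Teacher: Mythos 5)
Your proposal matches the paper's own argument: the theorem is obtained exactly by chaining the pointwise estimate $M^{\sharp}(T^{*}_{a}f)\lesssim Mf$ of Proposition \ref{P2} with the Fefferman--Stein inequality for $\omega\in A_{\infty}$ and the weighted boundedness of the Hardy--Littlewood maximal operator on $L^{p}_{\omega}$ for $\omega\in A_{p}$. Your extra bookkeeping on the \emph{a priori} finiteness of $\|M(T^{*}_{a}u)\|_{L^{p}_{\omega}}$ via a dense subclass is a standard refinement that the paper leaves implicit, so the two proofs are essentially identical.
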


For a function $u\in L^{1}_{loc}(\mathbb{R}^{n})$, we define the Fefferman-Stein sharp maximal
function and Hardy-Littlewood maximal function by the formula:
$$M^{\sharp}u(x)=\sup\limits_{x\in Q}\inf\limits_{c}\frac{1}{|Q|}\int_{Q}|u(y)-c|dy\quad {\rm and}\quad Mu(x)=\sup\limits_{x\in Q}\frac{1}{|Q|}\int_{Q}|u(y)|dy$$
respectively, where $c$ moves over all complex number, and $Q$ containing $x$ moves over all cubes with its sides parallel to the coordinate axes.

Now we are going to prove Proposition \ref{P1} and Proposition \ref{P2}. Before this, we state two useful facts
\begin{prop}
Let $0\leq\varrho\leq1,$ $0\leq\delta<1$ and $a(x,\xi)\in S^{m}_{\varrho,\delta}$.
If $$m< -\frac{n}{2}(1-\varrho)-\frac{n}{2}\max\{\delta-\varrho,0\},$$
then
$$\|T_{a}u\|_{L^{1}}\lesssim\|u\|_{L^{1}}.$$
\end{prop}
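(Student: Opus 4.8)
The plan is to decompose $a$ dyadically in frequency, estimate the $L^{1}\to L^{1}$ operator norm of each piece through the column sums of its kernel, and sum a geometric series whose ratio is negative precisely because the inequality on $m$ is strict. Fix a partition of unity $1=\psi_{0}(\xi)+\sum_{j\geq1}\psi(2^{-j}\xi)$, with $\psi$ supported in $|\xi|\sim1$, and set $a_{j}=a\,\psi(2^{-j}\cdot)$ for $j\geq1$ and $a_{0}=a\psi_{0}$. Then $a_{j}$ is supported in $|\xi|\sim2^{j}$ and inherits $|\partial_{\xi}^{\alpha}\partial_{x}^{\beta}a_{j}|\lesssim2^{j(m-\varrho|\alpha|+\delta|\beta|)}$. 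Writing $K_{j}(x,y)=\int e^{i\langle x-y,\xi\rangle}a_{j}(x,\xi)\,d\xi$ for the kernel of $T_{a_{j}}$, the elementary bound $\|T_{a_{j}}f\|_{L^{1}}\leq\|f\|_{L^{1}}\sup_{y}\int_{\mathbb{R}^{n}}|K_{j}(x,y)|\,dx$ reduces everything to controlling the column sums $J_{j}(y):=\int_{\mathbb{R}^{n}}|K_{j}(x,y)|\,dx$ and summing over $j$. Integration by parts in $\xi$ (using $\langle x-y,\nabla_{\xi}\rangle$) gives the pointwise decay $|K_{j}(x,y)|\lesssim2^{j(n+m)}(1+2^{j\varrho}|x-y|)^{-N}$ for every $N$, which identifies $2^{-j\varrho}$ as the natural spatial concentration scale of $K_{j}$.

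The naive bound $J_{j}(y)\lesssim2^{j(n(1-\varrho)+m)}$ coming from this pointwise decay alone is too weak, so the key is a weighted Cauchy--Schwarz argument built on an $L^{2}$ kernel estimate. For a frequency-localized symbol $b\in S^{m'}_{\varrho,\delta}$ (supported in $|\xi|\sim2^{j}$, with $|b|\lesssim2^{jm'}$) I will establish the estimate
$$\int_{\mathbb{R}^{n}}|K_{b}(x,y)|^{2}\,dx\lesssim2^{j(n+2m')}\,2^{jn\max\{\delta-\varrho,0\}}\qquad(\star)$$
uniformly in $y$. Granting $(\star)$, I insert the weight $(1+2^{j\varrho}|x-y|)^{s}$ with $2s>n$ and split by Cauchy--Schwarz as $J_{j}(y)\leq P\cdot W$, where $P=\big(\int(1+2^{j\varrho}|x-y|)^{-2s}dx\big)^{1/2}\sim2^{-j\varrho n/2}$ and $W^{2}=\int|K_{j}(x,y)|^{2}(1+2^{j\varrho}|x-y|)^{2s}\,dx$. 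Expanding the weight and using the identity $(x-y)^{\gamma}K_{j}(x,y)=K_{(i\partial_{\xi})^{\gamma}a_{j}}(x,y)$, with $(i\partial_{\xi})^{\gamma}a_{j}$ of order $m-\varrho|\gamma|$, estimate $(\star)$ applied term by term yields $W^{2}\lesssim\sum_{k\leq2s}2^{2j\varrho k}\,2^{j(n+2(m-\varrho k))}2^{jn\max\{\delta-\varrho,0\}}$; the crucial cancellation $2^{2j\varrho k}\cdot2^{-2j\varrho k}=1$ makes every term comparable, so $W\lesssim2^{j(n/2+m)}2^{j\frac{n}{2}\max\{\delta-\varrho,0\}}$. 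Hence
$$J_{j}(y)\lesssim2^{j\theta},\qquad\theta=\tfrac{n}{2}(1-\varrho)+\tfrac{n}{2}\max\{\delta-\varrho,0\}+m,$$
and the hypothesis $m<-\tfrac{n}{2}(1-\varrho)-\tfrac{n}{2}\max\{\delta-\varrho,0\}$ is exactly the statement $\theta<0$. Summing the geometric series $\sum_{j\geq0}2^{j\theta}<\infty$ gives $\sup_{y}\int|K(x,y)|\,dx<\infty$ and therefore $\|T_{a}f\|_{L^{1}}\lesssim\|f\|_{L^{1}}$ (the low-frequency piece $a_{0}$ is trivial, since $\langle\xi\rangle\sim1$ on its support forces rapid kernel decay and $J_{0}\lesssim1$).

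The main obstacle is $(\star)$, which is the genuine $L^{2}$ input and the only place the order $-\tfrac{n}{2}\max\{\delta-\varrho,0\}$ of the Hörmander--Calderón--Vaillancourt $L^{2}$ theory enters. I would prove it by Plancherel in the $x$ variable: formally $\int_{x}|K_{b}(x,y)|^{2}dx=\iint e^{-i\langle y,\xi-\xi'\rangle}\widehat{\big(b(\cdot,\xi)\overline{b(\cdot,\xi')}\big)}(\xi'-\xi)\,d\xi\,d\xi'$, where the $x$-smoothness of $b$ on scale $2^{-j\max\{\delta,\varrho\}}$ confines the inner transform to $|\xi-\xi'|\lesssim2^{j\max\{\delta,\varrho\}}$. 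Since $b(\cdot,\xi)$ does not decay in $x$, this must be made rigorous by inserting a spatial cutoff at the kernel's concentration scale $2^{-j\varrho}$ and treating the rapidly decaying dyadic tails $|x-y|\sim2^{l-j\varrho}$ via the integration-by-parts decay; the near term then produces the factor $2^{jn\max\{\delta-\varrho,0\}}$ (using $\max\{\delta,\varrho\}-\varrho=\max\{\delta-\varrho,0\}$), and the tails are summable in $l$. I emphasize that the strict inequality is essential: at the endpoint $\theta=0$ the series diverges, consistent with the fact that only the weak-type $(1,1)$ bound of Theorem~1.1 survives there.
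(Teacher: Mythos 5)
Your overall architecture is sound, and it supplies an argument where the paper gives none: the paper declares this proposition ``trivial'' and simply refers to the $(H^1,L^1)$ estimates of \'{A}lvarez--Hounie for $0<\varrho\leq1$, claiming the case $\varrho=0$ follows ``by a similar argument.'' Your skeleton --- dyadic frequency decomposition, the Schur-test reduction to $\sup_y\int|K_j(x,y)|\,dx$, weighted Cauchy--Schwarz in which the factors $2^{2j\varrho k}$ from the weight cancel exactly against the gain $2^{-2j\varrho k}$ from $\xi$-differentiation, and a geometric series whose ratio $2^{j\theta}$ has $\theta<0$ precisely because the inequality on $m$ is strict --- is exactly the kernel-estimate machinery implicit in that reference, and the estimate $(\star)$ itself is true as stated. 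So the main line is correct and complete modulo $(\star)$, which you rightly identify as the only place the $L^2$ theory enters.

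The genuine soft spot is your sketched proof of $(\star)$. If ``treating the dyadic tails $|x-y|\sim 2^{l}2^{-j\varrho}$ via the integration-by-parts decay'' means using the pointwise bound $|K_b(x,y)|\lesssim 2^{j(n+m')}(1+2^{j\varrho}|x-y|)^{-N}$ and integrating over the annulus (volume $\sim 2^{ln}2^{-j\varrho n}$), the bookkeeping fails: this yields $\int_{|x-y|\sim 2^l2^{-j\varrho}}|K_b|^2\,dx\lesssim 2^{j(n+2m')}2^{jn(1-\varrho)}2^{-l(2N-n)}$, which overshoots the right-hand side of $(\star)$ by the factor $2^{jn(1-\mu)}$ with $\mu=\max\{\delta,\varrho\}$, uniformly in $l$; in the regime the paper actually needs ($\varrho=0$, $\delta<1$) this loss $2^{jn(1-\delta)}$ is fatal for every fixed $l$, no matter how large $N$ is. The repair is to integrate by parts \emph{and then} apply the Plancherel/almost-orthogonality estimate on each annulus: write $K_b=|x-y|^{-2N}K_{(-\Delta_\xi)^N b}$, note $(-\Delta_\xi)^N b$ has order $m'-2N\varrho$ on the same dyadic support, and the annulus version of your near-term argument then gives the target times $2^{-l(4N-n)}$, summable in $l$. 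Cleaner still, avoid cutoffs altogether: since $b$ is supported in $|\xi|\sim 2^j$, one has $K_b(\cdot,y)=T_b\big[\phi_j(\cdot-y)\big]$ where $\widehat{\phi_j}\equiv 1$ on that annulus and $\|\phi_j\|_{L^2}\sim 2^{jn/2}$, so $(\star)$ follows at once from the dyadic $L^2$ operator bound $\|T_b\|_{L^2\to L^2}\lesssim 2^{jm'}2^{j\frac{n}{2}\max\{\delta-\varrho,0\}}$ (H\"{o}rmander--Hounie, which is where $\delta<1$ is used); this is precisely the device the paper employs in its Lemma 2.3, writing $K_j(x,y)=T_{c_j}h_j(x-y)$ with $c_j=a_j|\xi|^n$ and $\widehat{h_j}=|\xi|^{-n}\chi_j$ and invoking $L^2$-boundedness of $T_{c_j}$. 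One cosmetic point: in the weight expansion take $2s$ an even integer $>n$ (or bound $(1+t)^{2s}\lesssim 1+t^{2s}$) so that $|x-y|^{2k}|K_j|^2$ converts exactly into kernels of the symbols $(i\partial_\xi)^\gamma a_j$. With $(\star)$ secured by either route, your proof is complete.
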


\begin{prop}
Let $0\leq\varrho\leq1$ and $a(x,\xi)\in L^{\infty}S^{m}_{\varrho}$.
If $$m< -\frac{n}{2}(1-\varrho),$$
then
$$\|T^{*}_{a}u\|_{L^{1}}\lesssim\|u\|_{L^{1}}.$$
\end{prop}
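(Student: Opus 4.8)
The plan is to prove the $L^{1}$ bound by a Schur-type (Minkowski) estimate on the kernel, reducing everything to a uniform bound on the $L^{1}$ norm of the kernel in one of its variables. Writing $T^{*}_{a}u(x)=\int_{\mathbb{R}^n}K(x,y)u(y)\,dy$ with
$$K(x,y)=\int_{\mathbb{R}^n}e^{i\langle x-y,\xi\rangle}a(y,\xi)\,d\xi,$$
Minkowski's inequality gives $\|T^{*}_{a}u\|_{L^{1}}\leq\big(\sup_{y}\int_{\mathbb{R}^n}|K(x,y)|\,dx\big)\|u\|_{L^{1}}$, so it suffices to show $\sup_{y}\int_{\mathbb{R}^n}|K(x,y)|\,dx<\infty$. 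For fixed $y$ the kernel is the inverse Fourier transform in $\xi$ of $a(y,\cdot)$ evaluated at $x-y$; after the substitution $z=x-y$ the quantity to control becomes $\sup_{y}\|\check a(y,\cdot)\|_{L^{1}(dz)}$. Crucially, the class $L^{\infty}S^{m}_{\varrho}$ controls $\partial^{\alpha}_{\xi}a(y,\xi)$ uniformly in $y$, which is exactly the information that the supremum over $y$ demands; no regularity in $y$ (or $x$) is used, which is why the rough class suffices.

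Next I would run a Littlewood–Paley decomposition in the frequency variable, $a(y,\xi)=\sum_{j\geq0}a_{j}(y,\xi)$, where $a_{j}$ is supported in $\{|\xi|\sim2^{j}\}$ for $j\geq1$ (and in a fixed ball for $j=0$) and inherits the bounds $|\partial^{\alpha}_{\xi}a_{j}(y,\xi)|\lesssim2^{j(m-\varrho|\alpha|)}$ uniformly in $y$. Setting $K_{j}(x,y)=\int e^{i\langle x-y,\xi\rangle}a_{j}(y,\xi)\,d\xi$, it is enough to bound $\sum_{j}\sup_{y}\|K_{j}(\cdot,y)\|_{L^{1}(dz)}$. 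The low-frequency term $K_{0}$ is harmless, since $a_{0}(y,\cdot)$ is a compactly supported symbol with uniformly bounded derivatives, so $K_{0}(\cdot,y)$ is Schwartz with uniformly controlled $L^{1}$ norm.

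The heart of the argument, and the step I expect to be the main obstacle, is extracting the sharp half-order gain for each $K_{j}$ by a weighted Cauchy–Schwarz estimate rather than the crude pointwise kernel bound (which would only yield the exponent $n(1-\varrho)$). Fix an integer $s$ with $2s>n$ and insert the weight $(1+2^{\varrho j}|z|)^{\pm s}$:
$$\int_{\mathbb{R}^n}|K_{j}(\cdot,y)|\,dz\leq\Big(\int(1+2^{\varrho j}|z|)^{-2s}\,dz\Big)^{1/2}\Big(\int(1+2^{\varrho j}|z|)^{2s}|K_{j}|^{2}\,dz\Big)^{1/2}.$$
The first factor equals $2^{-\varrho j n/2}$ times a finite constant (here $2s>n$ is used). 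For the second factor I would pass to the frequency side via Plancherel: multiplication by $z^{\alpha}$ corresponds to $\partial^{\alpha}_{\xi}$, so $\int|z|^{2s}|K_{j}|^{2}\,dz\sim\sum_{|\alpha|=s}\|\partial^{\alpha}_{\xi}a_{j}(y,\cdot)\|_{L^{2}}^{2}\lesssim2^{j(2(m-\varrho s)+n)}$, using that $\partial^{\alpha}_{\xi}a_{j}$ is $O(2^{j(m-\varrho s)})$ on an annulus of measure $2^{jn}$. The factor $2^{2\varrho j s}$ from the weight then cancels the $\varrho$-gain exactly, giving $\int(2^{\varrho j}|z|)^{2s}|K_{j}|^{2}\,dz\lesssim2^{j(2m+n)}$, the same order as the unweighted $\|K_{j}\|_{L^{2}}^{2}$. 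Hence the second factor is $\lesssim2^{j(m+n/2)}$, and the product yields
$$\sup_{y}\|K_{j}(\cdot,y)\|_{L^{1}(dz)}\lesssim2^{j(m+\frac{n}{2}(1-\varrho))}.$$
Summing the geometric series over $j\geq0$ converges precisely because $m<-\frac{n}{2}(1-\varrho)$, which closes the estimate and explains the sharpness of the exponent. The delicate point is that the exponent $\frac{n}{2}(1-\varrho)$ — half of what the naive $L^{1}$ kernel estimate produces — arises only from balancing the weighted $L^{2}$ bound (Plancherel plus the $\varrho$-gain per $\xi$-derivative) against the integrability of the weight, and that is where all the work lies.
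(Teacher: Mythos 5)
Your proposal is correct. A point of comparison worth noting: the paper does not actually prove this proposition at all --- it declares the proof ``trivial'' and defers to \'Alvarez--Hounie \cite{Hounie}, where the relevant estimates sit inside the $(H^1,L^1)$ theory for $0<\varrho\leq 1$, with the remark that $\varrho=0$ follows similarly. Your argument supplies exactly the details being waved at: the Schur reduction $\|T^*_a u\|_{L^1}\leq \bigl(\sup_y\|\check a(y,\cdot)\|_{L^1}\bigr)\|u\|_{L^1}$ is the right way to exploit the structure of $T^*_a$ (the symbol depends on the integration variable $y$, so the kernel in $x$ is a convolution-type profile $\check a(y,x-y)$, and only uniform-in-$y$ control of $\xi$-derivatives is needed --- precisely what $L^\infty S^m_\varrho$ provides), and the dyadic decomposition plus weighted Cauchy--Schwarz/Plancherel computation giving $\sup_y\|K_j(\cdot,y)\|_{L^1}\lesssim 2^{j(m+\frac{n}{2}(1-\varrho))}$ is the classical kernel estimate underlying the cited $(H^1,L^1)$ results; your bookkeeping (first factor $\sim 2^{-\varrho jn/2}$ from $2s>n$, second factor $\sim 2^{j(m+n/2)}$ via $\int z^{2\beta}|K_j|^2\,dz = c\,\|\partial^\beta_\xi a_j(y,\cdot)\|_{L^2}^2$ on an annulus of measure $2^{jn}$, with the $2^{2\varrho js}$ weight cancelling the $\varrho$-gain) is accurate, and summability is exactly the hypothesis $m<-\frac{n}{2}(1-\varrho)$. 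One cosmetic remark: since the undecomposed kernel integral need not converge absolutely when $m\geq -n$, you should perform the Littlewood--Paley decomposition \emph{before} invoking Minkowski, applying the Schur bound to each $T^*_j$ and summing in $L^1$; this is the order in which your estimates actually run, so it is a matter of presentation rather than a gap.
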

Their proofs is trivial. See \cite{Hounie} for the case $0<\varrho\leq1$, the proofs are contained in $(H^{1},L^{1})$ estimates for $T_{a}$ and $T^{*}_{a}$ respectively. The case $\varrho=0$ can be gotten by a similar argument.

Let \begin{eqnarray}\label{K}
K(x,y)
=\frac{1}{(2\pi)^n}\int_{\mathbb{R}^n} e^{ i \langle x-y,\xi\rangle}a(x,\xi)d\xi
~{\rm and}~
K^{*}(x,y)
=\frac{1}{(2\pi)^n}\int_{\mathbb{R}^n} e^{ i \langle x-y,\xi\rangle}a(y,\xi)d\xi.
\end{eqnarray}
Then $T_{a}$ and $T^{*}_{a}$ can be written as
\begin{eqnarray}
T_{a}u(x)
=\int_{\mathbb{R}^n}K(x,y) u(y)dy \quad
{\rm and}\quad
T^{*}_{a}u(x)
=\int_{\mathbb{R}^n}K^{*}(x,y) u(y)dy
\end{eqnarray}
respectively. Now we introduce the standard Littlewood-Paley partition of unity. Let $C>1$ be a constant. Set
$E_{-1}=\{\xi:|\xi|\leq 2C\}$, $E_{j}=\{\xi:C^{-1}2^{j}\geq|\xi|\leq C2^{j+1}\}$, $j=0,1,2,\cdots$.

\begin{lem}\label{L0}
There exist $\psi_{-1}(\xi),\psi(\xi)\in C^{\infty}_{0}$, such that
\begin{enumerate}
  \item $\supp\psi\subset E_{0}$, $\supp\psi_{-1}\subset E_{-1};$
  \item $0\leq\psi\leq1$, $0\leq\psi_{-1}\leq1;$
  \item $\psi_{-1}(\xi)+\sum\limits^{\infty}_{j=1}\psi(2^{-j}\xi)=1.$
\end{enumerate}
\end{lem}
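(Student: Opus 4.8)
The plan is to reduce the whole lemma to a single smooth radial cutoff and then obtain $\psi$ and $\psi_{-1}$ as dyadic differences of it, so that the partition identity (3) becomes a telescoping sum. First I would fix a smooth non-increasing profile $h\colon[0,\infty)\to[0,1]$ with $h(t)=1$ for $t\le C^{-1}$ and $h(t)=0$ for $t\ge C$; such an $h$ exists by the standard construction of cutoff functions (mollifying a step function, or gluing the classical flat function $e^{-1/t}$), and since $C>1$ the two plateaus $t\le C^{-1}$ and $t\ge C$ are genuinely separated, leaving room for the smooth transition. Setting $\Phi(\xi):=h(|\xi|)$ produces a radial function in $C^{\infty}_{0}(\mathbb{R}^{n})$ (smooth at the origin because $h\equiv1$ there) with $0\le\Phi\le1$, $\Phi\equiv1$ on $\{|\xi|\le C^{-1}\}$, $\supp\Phi\subset\{|\xi|\le C\}$, and $\Phi(\xi)\ge\Phi(\eta)$ whenever $|\xi|\le|\eta|$.

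Next I would define
\[
\psi_{-1}(\xi):=\Phi(\xi/2),\qquad \psi(\xi):=\Phi(\xi/2)-\Phi(\xi).
\]
Properties (1) and (2) are then immediate. The function $\psi_{-1}=\Phi(\cdot/2)$ is supported in $\{|\xi|\le 2C\}=E_{-1}$ and takes values in $[0,1]$. For $\psi$, the term $\Phi(\xi)$ kills the region $|\xi|\le C^{-1}$ (where $\Phi(\xi)=\Phi(\xi/2)=1$), while $\Phi(\xi/2)$ vanishes for $|\xi|\ge 2C$, so $\supp\psi\subset\{C^{-1}\le|\xi|\le 2C\}=E_{0}$; and the monotonicity of $\Phi$ in $|\xi|$ gives $0\le\Phi(\xi)\le\Phi(\xi/2)$, hence $0\le\psi\le\Phi(\xi/2)\le1$.

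The only point requiring a short computation is identity (3). Writing $b_{j}:=\Phi(2^{-j}\xi)$, the scaled bumps are $\psi(2^{-j}\xi)=\Phi(2^{-(j+1)}\xi)-\Phi(2^{-j}\xi)=b_{j+1}-b_{j}$, so the partial sums telescope to $\sum_{j=1}^{N}\psi(2^{-j}\xi)=b_{N+1}-b_{1}=\Phi(2^{-(N+1)}\xi)-\Phi(\xi/2)$. For fixed $\xi$ only finitely many summands are nonzero, since $\psi(2^{-j}\xi)\neq0$ forces $C^{-1}\le 2^{-j}|\xi|\le 2C$, a bounded range of $j$; thus there is no convergence subtlety, and letting $N\to\infty$ with $\Phi(2^{-(N+1)}\xi)\to\Phi(0)=1$ yields $\sum_{j=1}^{\infty}\psi(2^{-j}\xi)=1-\Phi(\xi/2)$. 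Adding $\psi_{-1}(\xi)=\Phi(\xi/2)$ gives exactly $1$, which is (3).

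I do not expect a genuine obstacle here: the statement is the classical Littlewood--Paley partition of unity, and the verification reduces to the elementary telescoping above. The only thing to handle with care is the radius bookkeeping — choosing the plateau radius $C^{-1}$ and support radius $C$ of $\Phi$ so that, after the dilation by $2$ built into $\psi$ and $\psi_{-1}$, the supports land precisely inside $E_{0}$ and $E_{-1}$ rather than slightly overshooting them.
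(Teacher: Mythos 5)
Your proof is correct: the telescoping construction $\psi_{-1}(\xi)=\Phi(\xi/2)$, $\psi(\xi)=\Phi(\xi/2)-\Phi(\xi)$ built from a radially non-increasing cutoff $\Phi$ is exactly the classical Littlewood--Paley partition of unity, and your verification of the supports, the bounds $0\le\psi,\psi_{-1}\le1$, and the locally finite telescoping sum is complete. The paper states Lemma \ref{L0} without proof as a standard fact, and your argument is precisely the standard construction it implicitly invokes (note you also correctly read the paper's misprinted condition $C^{-1}2^{j}\geq|\xi|\leq C2^{j+1}$ as $C^{-1}2^{j}\leq|\xi|\leq C2^{j+1}$).
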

By Lemma \ref{L0}, the symbol $a(x,\xi)$ can been written as
$$a(x,\xi)=a(x,\xi)\big(\psi_{-1}(\xi)+\sum\limits^{\infty}_{j=1}\psi(2^{-j}\xi)\big)=:\sum\limits^{\infty}_{j=0}a_{j}(x,\xi).$$
Consequently, the operator $T_{a}$ and $T^{*}_{a}$ can been decomposed as
\begin{eqnarray}\label{de}
T_{a}u(x)=\sum\limits_{j=0}^{\infty}T_{j}u(x)
~
 \mathrm{and}~
T^{*}_{a}u(x)=\sum\limits_{j=0}^{\infty}T^{*}_{j}u(x),
\end{eqnarray}
respectively, where
\begin{eqnarray}
T_{j}u(x)
=\int_{\mathbb{R}^n}K_{j}(x,y) u(y)dy \quad
{\rm with}\quad
K_{j}(x,y)
=\frac{1}{(2\pi)^n}\int_{\mathbb{R}^n} e^{ i \langle x-y,\xi\rangle}a_{j}(x,\xi)d\xi
\end{eqnarray}
\begin{eqnarray}
T^{*}_{j}u(x)
=\int_{\mathbb{R}^n}K^{*}_{j}(x,y) u(y)dy
\quad
{\rm with}\quad
K^{*}_{j}(x,y)
=\frac{1}{(2\pi)^n}\int_{\mathbb{R}^n} e^{ i \langle x-y,\xi\rangle}a_{j}(y,\xi)d\xi
\end{eqnarray}

In this note, we denote by $Q(x_{0},l)$ a fixed cube centered at $x_{0}$ with the side length $l>0$.
\begin{lem}\label{L1}
Let $l<1$ and $j$ be a positive integer. If $a\in S^{-n}_{0,\delta}$ with $0\leq\delta<1$, then
\begin{eqnarray}\label{E1}
\int_{\mathbb{R}^{n}}|u(y)||K_{j}(x,y)-K_{j}(z,y)|dy\lesssim 2^{j}lMu(x_{0}),\quad \forall x,z\in Q(x_{0},l)
\end{eqnarray}
If $a\in L^{\infty}S^{-n}_{0}$, then
\begin{eqnarray}\label{E2}
\int_{\mathbb{R}^{n}}|u(y)||K^{*}_{j}(x,y)-K^{*}_{j}(z,y)|dy\lesssim 2^{j}lMu(x_{0}),\quad \forall x,z\in Q(x_{0},l).
\end{eqnarray}
\end{lem}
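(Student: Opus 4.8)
The plan is to reduce everything to first-order Taylor/mean-value expansions of $K_j$ in its first slot, exploiting that after inserting the cutoff the symbol is frequency-localized to $|\xi|\sim 2^j$. First I would record the symbol estimates for $a_j(x,\xi)=a(x,\xi)\psi(2^{-j}\xi)$. Since $\varrho=0$, differentiation in $\xi$ gains no decay from $a$ itself, so the only useful gain comes from the cutoff; on the annulus $|\xi|\sim 2^j$ this gives $|\partial_\xi^\gamma\partial_x^\beta a_j(x,\xi)|\lesssim 2^{j(-n+\delta|\beta|)}$ for all $\gamma,\beta$, with \emph{no} improvement in $|\gamma|$. Integration by parts in $\xi$ (legitimate because $\psi\in C_0^\infty$ makes $a_j$ compactly supported in $\xi$) then yields, for every $N$, the unit-scale bound $|K_j(x,y)|\lesssim(1+|x-y|)^{-N}$, reflecting that for $\varrho=0$ the operators are not pseudo-local and $K_j$ is localized only at scale $1$, not at scale $2^{-j}$.

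Next I would split the difference for $x,z\in Q(x_0,l)$ as
\[
K_j(x,y)-K_j(z,y)=I+II,
\]
where $I$ collects the phase difference $e^{i\langle x-y,\xi\rangle}-e^{i\langle z-y,\xi\rangle}$ against $a_j(x,\xi)$, and $II$ collects $e^{i\langle z-y,\xi\rangle}$ against $a_j(x,\xi)-a_j(z,\xi)$. For $I$ I would write $e^{i\langle x-z,\xi\rangle}-1=i\int_0^1\langle x-z,\xi\rangle e^{it\langle x-z,\xi\rangle}\,dt$, so $I$ becomes an average over $t\in[0,1]$ of oscillatory integrals with phase $e^{i\langle w_t-y,\xi\rangle}$, $w_t=z+t(x-z)\in Q(x_0,l)$, and amplitude $\langle x-z,\xi\rangle a_j(x,\xi)$. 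The factor $\langle x-z,\xi\rangle$ contributes $|x-z|\,|\xi|\lesssim l\,2^j$ on the support, and integrating by parts in $\xi$ turns this into $|I|\lesssim l\,2^j\int_0^1(1+|w_t-y|)^{-N}\,dt$. For $II$ I would use $a_j(x,\xi)-a_j(z,\xi)=\int_0^1\langle x-z,\nabla_x a_j(w_t,\xi)\rangle\,dt$; since $|\nabla_x a_j|\lesssim 2^{j(-n+\delta)}$, the same procedure gives $|II|\lesssim l\,2^{j\delta}(1+|z-y|)^{-N}$, dominated by the bound for $I$ because $\delta<1$. Thus $|K_j(x,y)-K_j(z,y)|\lesssim l\,2^j\big[\int_0^1(1+|w_t-y|)^{-N}\,dt+(1+|z-y|)^{-N}\big]$, with all centers within $\sqrt n\,l<\sqrt n$ of $x_0$.

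Finally I would integrate against $|u|$ and extract a single maximal function. Each kernel $(1+|c-y|)^{-N}$ with $c$ at distance $O(l)$ from $x_0$ is treated by decomposing $\mathbb{R}^n$ into the dyadic annuli $\{|y-x_0|\sim 2^k\}$: on each annulus $\int|u|\lesssim 2^{kn}Mu(x_0)$, while the kernel is $\lesssim 2^{-kN}$ for $k$ large (the finitely many small-$k$ terms being controlled by $Mu(x_0)$ directly, using $l<1$), so summing over $k$ with $N>n$ gives $\int|u(y)|(1+|c-y|)^{-N}\,dy\lesssim Mu(x_0)$ uniformly in $c$ and $j$. Combined with the previous bound this yields \eqref{E1}. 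For the dual kernel $K_j^*(x,y)=\frac{1}{(2\pi)^n}\int e^{i\langle x-y,\xi\rangle}a_j(y,\xi)\,d\xi$ the $x$-dependence sits entirely in the phase, so only the term-$I$ computation is needed; since that step differentiates the amplitude in $\xi$ but never in $x$, the estimate goes through verbatim under the weaker hypothesis $a\in L^\infty S^{-n}_0$, giving \eqref{E2}.

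The main obstacle is the interplay forced by $\varrho=0$: integration by parts in $\xi$ buys no decay from the symbol, so $K_j$ carries only unit-scale spatial decay rather than the scale-$2^{-j}$ decay available for $\varrho>0$. The plan therefore relies on the frequency cutoff alone to produce summable polynomial decay, and one must check that this weak, $j$-independent decay, multiplied by the genuinely $j$-growing size factor $2^j l$, still collapses to the clean bound $2^j l\,Mu(x_0)$ after integration, uniformly in both $j$ and $l$. Verifying this uniformity, together with the bookkeeping that relocates the decay centers $w_t,z$ to the fixed point $x_0$, is the only delicate point; the oscillatory-integral estimates themselves are routine once the symbol bounds are in place.
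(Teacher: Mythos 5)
Your proposal is correct and follows essentially the same route as the paper's (very terse) proof: the phase-difference factor produces the gain $|x-z|\,|\xi|\lesssim l\,2^{j}$, integration by parts in $\xi$ — exploiting only the unit-scale decay available when $\varrho=0$ — handles the far region, and the resulting kernel bound integrates against $|u|$ to give $Mu(x_{0})$, exactly the paper's split into $|y-x_{0}|\leq 2$ (direct calculation) and $|y-x_{0}|>2$ (integration by parts). Your only addition is to write out the symbol-difference term $a_{j}(x,\xi)-a_{j}(z,\xi)$ for \eqref{E1}, contributing $l\,2^{j\delta}\leq l\,2^{j}$, which the paper subsumes under ``the proof of \eqref{E1} is standard'' while outlining only \eqref{E2}, where that term is absent.
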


\begin{proof}
The proof of (\ref{E1})  and (\ref{E2}) is standard. We only show a outline of proving (\ref{E2}).
Integrand of left side of (\ref{E2}) can be bounded by
\begin{eqnarray}\label{E40}
\int_{\mathbb{R}^{n}}|u(y)||\int_{\mathbb{R}^n}\big(e^{i\langle x-y,\xi\rangle}-e^{ i \langle z-y,\xi\rangle}\big)a_{j}(y,\xi)d\xi|dy.
\end{eqnarray}
Break up this integrand as follows
\begin{eqnarray*}
\int_{|y-x_{0}|\leq2}
+
\int_{|y-x_{0}|>2}.
\end{eqnarray*}
A direct calculation gives the first term is bounded by $2^{j}lMu(x_{0})$, and integration by parts with respect to the variable $\xi$ yields that the second term has the same bound.
Thus the proof is completed.
\end{proof}

\begin{lem}\label{L2}
Let $l<1$ and $j$ be a positive integer satisfying $2^{j}\geq l^{-1}$. If
$a\in S^{-n-\frac{n}{2}\delta}_{0,\delta}$ with $0\leq\delta<1$
then
\begin{eqnarray}\label{E3}
\frac{1}{|Q(x_{0},l)|}\int_{Q(x_{0},l)}|T_{j}u(x)|dx
&\lesssim&2^{-j\frac{n}{2}}l^{-\frac{n}{2}}Mu(x_{0}).
\end{eqnarray}
If $a\in L^{\infty}S^{-n}_{0}$, then
\begin{eqnarray}\label{E4}
\frac{1}{|Q(x_{0},l)|}\int_{Q(x_{0},l)}|T^{*}_{j}u(x)|dx
&\lesssim&2^{-j\frac{n}{2}}l^{-\frac{n}{2}}Mu(x_{0}).
\end{eqnarray}
\end{lem}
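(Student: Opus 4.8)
The plan is to run \eqref{E3} and \eqref{E4} through a common scheme. First pass from the $L^1$-average to an $L^2$-average by Cauchy--Schwarz,
$$\frac{1}{|Q(x_0,l)|}\int_{Q(x_0,l)}|T_ju(x)|\,dx\le|Q(x_0,l)|^{-1/2}\|T_ju\|_{L^2(Q(x_0,l))}=l^{-n/2}\|T_ju\|_{L^2(Q(x_0,l))},$$
and then exploit two features of the block whose symbol $a_j$ is supported in $\{|\xi|\sim2^j\}$. The first is frequency localization: Plancherel in $\xi$ bounds the $L^2$-mass of the kernel by $\big(\int_{|\xi|\sim2^j}|a_j|^2\,d\xi\big)^{1/2}\lesssim2^{j(m+n/2)}$, which for $m=-n$ yields the decisive gain $2^{-jn/2}$. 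The second is smoothness in $\xi$: integrating by parts gives, for every $N$, both the pointwise bound $|K_j^*(x,y)|\lesssim2^{j(m+n)}(1+|x-y|)^{-N}$ and, by carrying the integration by parts inside the Plancherel computation, the localized estimate $\|K_j^*(\cdot,y)\chi_{\{|x-y|\sim2^k\}}\|_{L^2_x}\lesssim2^{-jn/2}2^{-kN}$ for $2^k\gtrsim1$. I would then split $u=u_1+u_2$, with $u_1=u\chi_{Q(x_0,c)}$ on the fixed unit scale $c\sim1$ on which $K_j$ concentrates when $\varrho=0$, and decompose the far part into dyadic annuli $A_k=\{|y-x_0|\sim2^k\}$, $k\ge0$.

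The dual estimate \eqref{E4} is the transparent case, because the Plancherel gain sits in the \emph{output} variable $x$ and hence pairs with the $L^1$-norm of the data. Minkowski's inequality with the localized kernel bound gives $\|T_j^*(u\chi_{A_k})\|_{L^2(Q(x_0,l))}\lesssim2^{-jn/2}2^{-kN}\|u\chi_{A_k}\|_{L^1}$, while the unit block satisfies $\|T_j^*u_1\|_{L^2}\lesssim2^{-jn/2}\|u_1\|_{L^1}\lesssim2^{-jn/2}Mu(x_0)$. Since $\|u\chi_{A_k}\|_{L^1}\lesssim2^{kn}Mu(x_0)$ by definition of the maximal function, summing over $k\ge0$ (with $N>n$) produces $l^{-n/2}\|T_j^*u\|_{L^2(Q(x_0,l))}\lesssim2^{-jn/2}l^{-n/2}Mu(x_0)$, which is exactly \eqref{E4}. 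No square function of $u$ ever enters, so the Hardy--Littlewood maximal function appears directly.

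For \eqref{E3} the asymmetry of $T_a$ forces a different treatment, since now the frequency Plancherel lives in the \emph{input} variable $y$: every $L^2$-based bound — whether through the annular operator norm $\|T_j\|_{L^2\to L^2}\lesssim2^{jm}2^{j\frac n2\delta}=2^{-jn}$ (the loss $2^{j\frac n2\delta}$ per annulus being precisely the H\"ormander $L^2$ threshold for $S^0_{0,\delta}$) or through $\|T_j\|_{L^2\to L^\infty}\lesssim2^{-\frac n2(1+\delta)j}$ — returns the $L^2$-average $\|u_1\|_{L^2}\lesssim|Q(x_0,c)|^{1/2}\big(M(|u|^2)(x_0)\big)^{1/2}$ rather than an $L^1$-average. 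To keep only the $L^1$-scale quantity I would instead perform a Calder\'on--Zygmund splitting of $u$ at height $\lambda\sim Mu(x_0)$. On the good part $g$ one has $|g|\lesssim\lambda$, so $\|g\chi_{Q(x_0,c)}\|_{L^2}\lesssim Mu(x_0)$ and the local orthogonality estimate closes because $2^{-jn}\le2^{-jn/2}$; the far good part is controlled by the localized Plancherel-in-$y$ kernel decay against $\|g\chi_{A_k}\|_{L^2}\lesssim2^{kn/2}Mu(x_0)$, the residual power $2^{-\frac n2(1+\delta)j}$ being absorbed into the target since $l<1$.

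The main obstacle is the bad part $b=\sum_i b_i$ of this decomposition. Each $b_i$ is supported on a Calder\'on--Zygmund cube $Q_i$, has mean zero, and satisfies $\int|b_i|\lesssim Mu(x_0)|Q_i|$, so one wants to use the regularity $|K_j(x,y)-K_j(x,y_i)|\lesssim2^j|y-y_i|\,2^{j(m+n)}(1+|x-y_i|)^{-N}$ exactly as in Lemma \ref{L1}. The difficulty is that the gradient of $K_j$ costs a factor $2^j$, so this cancellation estimate is only gainful on cubes with $2^j\,\mathrm{diam}(Q_i)\lesssim1$; on the larger cubes one must instead revert to the size bound for $K_j$ and argue by the distance to $Q(x_0,l)$. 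Organizing the sum over both regimes of cube scales so that the factor $2^j\,\mathrm{diam}(Q_i)$ does not spoil convergence, while still reproducing the clean bound $2^{-j\frac n2}l^{-n/2}Mu(x_0)$, is the step I expect to require the most care.
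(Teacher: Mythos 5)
Your argument for \eqref{E4} is correct and is essentially the paper's own proof: Cauchy--Schwarz on $Q(x_0,l)$, Minkowski's inequality, a near/far splitting in $y$, Plancherel on the annulus $\{|\xi|\sim 2^j\}$ for the near part, and integration by parts in $\xi$ for the far part (the paper uses the single weight $|y-x_0|^{-2N}$ instead of dyadic annuli, an immaterial difference).

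For \eqref{E3}, however, there is a genuine gap, and it originates in a misdiagnosis. You claim that every $L^2$-based bound for $T_j$ must pair with $\|u\|_{L^2}$, and on that basis you abandon the scheme that worked for \eqref{E4} in favor of a Calder\'on--Zygmund decomposition. But the obstruction is illusory: Minkowski's inequality gives $\|T_ju\|_{L^2(Q)}\le\int \|K_j(\cdot,y)\|_{L^2(Q)}\,|u(y)|\,dy$, so it suffices to bound the kernel slices $\|K_j(\cdot,y)\|_{L^2_x}$ \emph{uniformly in $y$}, exactly as in your dual argument. The paper achieves this despite the $x$-dependence of $a_j$ by the factorization $K_j(x,y)=T_{c_j}h_j(x-y)$, where $c_j(x,\xi)=a_j(x,\xi)|\xi|^{n}\in S^{-\frac{n}{2}\delta}_{0,\delta}$ uniformly in $j$ and $\widehat{h_j}(\xi)=|\xi|^{-n}\chi_j(\xi)$, so that $\|h_j\|_{L^2}\sim 2^{-j\frac{n}{2}}$. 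Since $-\frac{n}{2}\delta$ is exactly the H\"ormander--Hounie $L^2$ threshold $-\frac{n}{2}\max\{\delta-\varrho,0\}$ at $\varrho=0$ (this is where the extra $\frac{n}{2}\delta$ in the hypothesis $a\in S^{-n-\frac{n}{2}\delta}_{0,\delta}$ is spent), $T_{c_j}$ is $L^2$-bounded uniformly in $j$, and translation invariance of the $L^2$ norm yields $\sup_y\|K_j(\cdot,y)\|_{L^2_x}\lesssim 2^{-j\frac{n}{2}}$; the far regime is handled identically with $\Delta_\xi^N a_j$ in place of $a_j$ (harmless since $\varrho=0$). With this, Minkowski against $|u|$ finishes the proof and no decomposition of $u$ is needed.

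Moreover, the step you flag as delicate in your Calder\'on--Zygmund route does in fact fail as sketched. With $m=-n-\frac{n}{2}\delta$, pointwise bounds give only $|K_j(x,y)|\lesssim 2^{-j\frac{n}{2}\delta}(1+|x-y|)^{-N}$ and $|\nabla_yK_j(x,y)|\lesssim 2^{j}\,2^{-j\frac{n}{2}\delta}(1+|x-y|)^{-N}$, so running the mean-zero cancellation over cubes with $2^j\ell(Q_i)\lesssim 1$ and the size bound over larger cubes yields at best $\frac{1}{|Q|}\int_Q|T_jb|\,dx\lesssim 2^{-j\frac{n}{2}\delta}Mu(x_0)$. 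This matches the target $2^{-j\frac{n}{2}}l^{-\frac{n}{2}}Mu(x_0)$ only when $l^{-1}\gtrsim 2^{j(1-\delta)}$, whereas the lemma only assumes $l^{-1}\le 2^j$: for instance, with $\delta=0$, $l\sim 1$ and $j$ large, your bad-part bound is $O(Mu(x_0))$ against a target of $2^{-j\frac{n}{2}}Mu(x_0)$. Recovering the factor $2^{-j\frac{n}{2}}$ on the small cube $Q(x_0,l)$ requires $L^2_x$-smallness of the kernel over $Q$, i.e.\ precisely the frozen-symbol/factorization input above --- and once that is available, the Calder\'on--Zygmund decomposition is superfluous.
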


\begin{proof}
We prove (\ref{E3}) first. H\"{o}lder's inequality and Minkowski's inequality implies that the left hand in (\ref{E3}) can be bounded by
\begin{eqnarray*}
l^{-\frac{n}{2}}\int_{\mathbb{R}^{n}}\big(\int_{|x-x_{0}|<l}|K_{j}(x,y)|^{2}dx\big)^{\frac{1}{2}}|u(y)|dy.
\end{eqnarray*}
So, it suffices to show
\begin{eqnarray*}
\int_{\mathbb{R}^{n}}\big(\int_{|x-x_{0}|<l}|K_{j}(x,y)|^{2}dx\big)^{\frac{1}{2}}|u(y)|dy
&\lesssim&2^{-j\frac{n}{2}}Mu(x_{0}).
\end{eqnarray*}
Break up the integral with respect to the variable $y$ as follows
\begin{eqnarray}\label{E01}
\int_{|y-x_{0}|\leq2}+\int_{|y-x_{0}|>2}.
\end{eqnarray}
Let $c_{j}(x,\xi)=a_j(x,\xi)|\xi|^{n}$ and $\widehat{h_{j}(\xi)}=|\xi|^{-n}\chi_{j}(\xi)$. Then we can write
\begin{eqnarray*}
K_{j}(x,y)%=\int_{{\mathbb{R}^n}}e^{ i\langle x-y, \xi\rangle}a_j(x,\xi)d\xi
%=\int_{{\mathbb{R}^n}}e^{ i\langle x-y, \xi\rangle}c_j(x,\xi)\hat{h_{j}(\xi)}d\xi
=T_{c_{j}}h_{j}(x-y)
\end{eqnarray*}
So, the first term in (\ref{E01}) can be wrote as
\begin{eqnarray*}
\int_{|y-x_{0}|\leq2}\big(\int_{\mathbb{R}^{n}}|T_{c_{j}}h_{j}(x-y)|^{2}dx\big)^{\frac{1}{2}}|u(y)|dy
\end{eqnarray*}
Notice $c_{j}\in S^{-\frac{n}{2}\delta}_{0,\delta}$. Moreover $T_{c_{j}}$ is bounded on $L^{2}$. So it can be bounded by
\begin{eqnarray*}
\int_{|y-x_{0}|\leq2}|u(y)|dy\big(\int_{\mathbb{R}^{n}}|h_{j}(\xi)|^{2}d\xi\big)^{\frac{1}{2}}
\leq 2^{-j\frac{n}{2}}Mu(x_{0}).
\end{eqnarray*}

Now we estimate the second term in (\ref{E01}). For positive integer $N>n/2$, denote $\tilde{c}_{j}(x,\xi)=\Delta^{N}_{\xi}a_j(x,\xi)|\xi|^{n}$ and $\widehat{\tilde{h}_{j}(\xi)}=|\xi|^{-n}\chi_{j}(\xi)$. Then we can write
\begin{eqnarray*}
K_{j}(x,y)
=\frac{1}{|x-y|^{2N}}\int_{{\mathbb{R}^n}}e^{ i\langle x-y, \xi\rangle}\Delta^{N}_{\xi}a_j(x,\xi)d\xi
=\frac{1}{|x-y|^{2N}}T_{\tilde{c}_{j}}\tilde{h}_{j}(x-y)
\end{eqnarray*}
So, the second term in (\ref{E01}) can be wrote as
\begin{eqnarray*}
\int_{|y-x_{0}|>2}\big(\int_{|x-x_{0}|<l}
|\frac{1}{|y-x|^{2N}}T_{\tilde{c}_{j}}\tilde{h}_{j}(x-y)|^{2}dx\big)^{\frac{1}{2}}|u(y)|dy.
\end{eqnarray*}
Notice that $|y-x|\sim|y-x_{0}|$ for any $|x-x_{0}|<l<1$ and $|y-x_{0}|>2\geq2l$. Then it is bounded by
\begin{eqnarray*}
\int_{|y-x_{0}|>2}\frac{1}{|y-x_{0}|^{2N}}\big(\int_{\mathbb{R}^{n}}
|T_{\tilde{c}_{j}}\tilde{h}_{j}(x-y)|^{2}dx\big)^{\frac{1}{2}}|u(y)|dy.
\end{eqnarray*}
Clearly, $\tilde{c_{j}}\in S^{-\frac{n}{2}\delta}_{0,\delta}$. So $L^{2}$ boundedness of $T_{\tilde{c_{j}}}$ gives that it has bound
\begin{eqnarray*}
\int_{|y-x_{0}|>2}\frac{1}{|y-x_{0}|^{2N}}|u(y)|dy\big(\int_{\mathbb{R}^{n}}|\tilde{h}_{j}(\xi)|^{2}d\xi\big)^{\frac{1}{2}}
\leq 2^{-j\frac{n}{2}}Mu(x_{0}).
\end{eqnarray*}

For (\ref{E4}), it be got by the same argument as above with $L^{2}$ boundedness of pseudo-differential operators replaced by Parseval's identity and there is no use for the smoothness of variable $y$ of $a(y,\xi)$. So the proof is finished.
\end{proof}

\begin{lem}\label{L3}
Let $l<1$ and $j$ be a positive integer satisfying $2^{j}\geq l^{-1}$. If $a\in S^{-n}_{0,\delta}$ with $0<\delta<1$, then for any $\lambda\geq1$,
\begin{eqnarray*}
\frac{1}{|Q|}\int_{Q(x_{0},l)}|T_{j}u(x)|dx
&\lesssim&\big(2^{j\delta}l^{\lambda}+l^{-\frac{n\lambda}{2}}2^{-j\frac{n}{2}}\big)Mu(x_{0}).
\end{eqnarray*}
\end{lem}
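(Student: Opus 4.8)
```latex
\textbf{Proof proposal for Lemma \ref{L3}.}
The plan is to interpolate between two competing estimates for the local average of $T_j u$, one good when $2^j$ is comparable to $l^{-1}$ (the ``near-diagonal'' regime) and one good for larger $j$, by splitting the symbol $a_j$ according to whether we put the decay of the symbol into the smoothness or into the frequency localization. The key observation is that the extra factor $2^{j\delta}$ in the first term is exactly the loss one incurs from the $x$-derivatives of a symbol in $S^{-n}_{0,\delta}$, while the second term $l^{-n\lambda/2}2^{-jn/2}$ is the $L^2$-type gain already present in Lemma \ref{L2}.

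\emph{First}, I would exploit the mean oscillation structure already set up in Lemma \ref{L1} and Lemma \ref{L2}. Fixing the cube $Q(x_0,l)$, I would write
\begin{equation*}
\frac{1}{|Q|}\int_{Q(x_0,l)}|T_j u(x)|\,dx
\leq \frac{1}{|Q|}\int_{Q(x_0,l)}|T_j u(x)-(T_j u)_Q|\,dx
+\big|(T_j u)_Q\big|,
\end{equation*}
and control the oscillation part by the $x$-regularity estimate \eqref{E1}, which contributes the $2^{j}l$-type term, and the average part by the $L^2$ bound \eqref{E3} from Lemma \ref{L2}, which contributes the $2^{-jn/2}l^{-n/2}$-type term. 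The crux is that to reach a \emph{general} power $l^{\lambda}$ rather than just $l^{1}$, I must iterate: the first-order difference in \eqref{E1} should be replaced by a higher-order Taylor expansion of the exponential $e^{i\langle x-y,\xi\rangle}$ around the center $x_0$, producing a factor $(|x-x_0||\xi|)^{\lambda}\lesssim (l\,2^j)^{\lambda}$ on the frequency annulus $E_j$, at the cost of $\lambda$ extra $\xi$-derivatives landing on $a_j$. Since $\varrho=0$, those $\xi$-derivatives cost nothing in the symbol class, so the Taylor remainder is controlled and produces the $2^{j\delta}l^\lambda$ term after accounting for the one surviving $\delta$-loss from the $x$-variable smoothing needed to realize the oscillation bound via the bootstrap of Lemma \ref{L1}.

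\emph{Second}, I would balance the two contributions. Writing the bound as $C(l\,2^{j})^{\lambda}\,2^{j\delta'}Mu(x_0)+C\,l^{-n/2}2^{-jn/2}Mu(x_0)$ and absorbing constants, the regime $2^j\geq l^{-1}$ guarantees $l\,2^j\geq 1$, so the first term is genuinely a loss that must be dominated by the tail decay available from the oscillation estimate; the delicate point is tracking exactly how the $\delta$-loss enters, which is why the hypothesis is sharpened to $0<\delta<1$ here. I expect the main obstacle to be precisely this bookkeeping: one must verify that the higher-order oscillation estimate (the $\lambda$-th order analogue of \eqref{E1}) holds with the clean exponent $2^{j\delta}l^{\lambda}$ rather than a worse power of $2^j$, and this requires carefully splitting the $y$-integral into $|y-x_0|\leq 2$ and $|y-x_0|>2$ exactly as in Lemma \ref{L1} and Lemma \ref{L2}, integrating by parts in $\xi$ on the far region to harvest decay in $|y-x_0|$ while keeping the annular support of $a_j$ under control. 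Once the two local estimates are in hand with the stated exponents, summing them and invoking the Hardy--Littlewood maximal bound $\frac{1}{|Q|}\int_{|y-x_0|\le 2}|u|\lesssim Mu(x_0)$ finishes the argument.
```
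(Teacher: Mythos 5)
There is a genuine gap: your mechanism for producing the first term $2^{j\delta}l^{\lambda}$ does not work, and it misses the one idea the lemma actually requires. You propose to Taylor-expand the phase $e^{i\langle x-y,\xi\rangle}$ around the cube center to order $\lambda$, claiming a factor $(|x-x_0||\xi|)^{\lambda}\lesssim (l2^{j})^{\lambda}$. But Lemma \ref{L3} lives precisely in the regime $2^{j}\geq l^{-1}$, where $l2^{j}\geq 1$, so the Taylor remainder $(l2^{j})^{\lambda}$ is large and higher-order expansion buys nothing; phase expansion only helps when $2^{j}<l^{-1}$, which is the complementary regime already handled by Lemma \ref{L1} at first order. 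Indeed your own intermediate bound $C(l2^{j})^{\lambda}2^{j\delta'}+Cl^{-n/2}2^{-jn/2}$ is strictly weaker than the target and cannot be repaired by ``bookkeeping'': the factor $2^{j\delta}$ in the lemma is not a phase-oscillation loss at all, but the signature of exactly one $x$-derivative of the symbol, since $a\in S^{-n}_{0,\delta}$ costs $\langle\xi\rangle^{\delta}\sim 2^{j\delta}$ per spatial derivative. A second, independent problem is your use of \eqref{E3} for the averaged term: Lemma \ref{L2} is stated for $a\in S^{-n-\frac{n}{2}\delta}_{0,\delta}$, because its proof needs $c_{j}=a_{j}|\xi|^{n}\in S^{-\frac{n}{2}\delta}_{0,\delta}$ to invoke $L^{2}$-boundedness; for $a\in S^{-n}_{0,\delta}$ with $\delta>0$ one only gets $c_{j}\in S^{0}_{0,\delta}$, which is not $L^{2}$-bounded in general, so \eqref{E3} is simply unavailable here. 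Overcoming this obstruction is the entire point of Lemma \ref{L3}.

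The paper's proof instead uses a symbol-freezing and subdivision argument. Cover $Q(x_{0},l)$ by $L^{n}\lesssim l^{n(1-\lambda)}$ subcubes $Q(x_{i},l^{\lambda})$ and, on each, compare $T_{j}$ with the frozen-coefficient operator $T_{j,i}$ of \eqref{0A}, whose symbol $a(x_{i},\xi)\psi(2^{-j}\xi)$ has no $x$-dependence. The freezing error is controlled by the mean value theorem in the spatial variable: $|a(x,\xi)-a(x_{i},\xi)|\lesssim |x-x_{i}|\,2^{j\delta}\langle\xi\rangle^{-n}$ on the annulus, which after the Lemma \ref{L1}-style splitting of the $y$-integral yields $|T_{j}u(x)-T_{j,i}u(x)|\lesssim l^{\lambda}2^{j\delta}Mu(x_{0})$ — this is where $2^{j\delta}l^{\lambda}$ truly comes from. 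The frozen operator is a pure Fourier multiplier, so the $L^{2}$ argument of Lemma \ref{L2} applies via Plancherel with no $\delta$-loss, giving $\int_{Q(x_{i},l^{\lambda})}|T_{j,i}u|\lesssim 2^{-j\frac{n}{2}}l^{\frac{n\lambda}{2}}Mu(x_{0})$ at the finer scale. Summing over the $\lesssim l^{n(1-\lambda)}$ subcubes and dividing by $|Q|=l^{n}$ produces exactly $\big(2^{j\delta}l^{\lambda}+l^{-\frac{n\lambda}{2}}2^{-j\frac{n}{2}}\big)Mu(x_{0})$. Note the tradeoff your proposal never engages: finer subcubes (larger $\lambda$) shrink the freezing error but worsen the multiplier term through the cube count, and it is this balance, exploited later with the choices $\lambda=\delta^{-k}$ in the proof of Theorem \ref{Th1}, that makes the lemma useful.
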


\begin{proof}
If $\lambda>1$, then $l^{\lambda}<l$. Take integer $L$ such that it is the first number no less than  $l^{1-\lambda}$, that is $L-1<l^{1-\lambda}\leq L$. Then there are $L^{n}$ cubes with the same side length $l^{\lambda}$ covering $Q(x_{0},l)$. Moreover, we have
$$Q(x_{0},l)\subset\cup_{i=1}^{L^{n}}Q(x_{i},l^{\lambda})\subset Q(x_{0},2l).$$
Clearly, $L^{n}\leq2^{n}l^{n(1-\lambda)}$. Denote
\begin{eqnarray}\label{0A}
T_{j,i}u(x)
&=&\int_{\mathbb{R}^n} e^{ i \langle x,\xi\rangle}a(x_{i},\xi) \psi(2^{-j}\xi) \hat{u}(\xi)d\xi.
\end{eqnarray}
\begin{eqnarray*}
&&\frac{1}{|Q|}\int_{Q(x_{0},l)}|T_{j}u(x)|dx\nonumber\\
&\leq&\frac{1}{|Q|}\sum\limits_{i=1}^{L^{n}}\bigg(\int_{Q(x_{i},l^{\lambda})}|T_{j}u(x)-T_{j,i}u(x)|dx+\int_{Q(x_{i},l^{\lambda})}|T_{j,i}u(x)|dx\bigg).
\end{eqnarray*}
Lemma \ref{L1} and Lemma \ref{L2} imply
\begin{eqnarray*}
|T_{j}f( x)-T_{j,i}f( x)|\lesssim l^{\lambda}2^{j\delta}Mu(x_{0})
\end{eqnarray*}
and
\begin{eqnarray*}
\int_{Q(x_{i},l^{\lambda})}|T_{j,i}u(x)|dx
&\lesssim&2^{-j\frac{n}{2}}l^{\frac{n\lambda}{2}}Mu(x_{0}),
\end{eqnarray*}
respectively. Recall $L^{n}\leq2^{n}l^{n(1-\lambda)}$, the desired estimate can be gotten immediately.

If $\lambda=1$, we define
\begin{eqnarray}
T_{j,0}u(x)
&=&\int_{\mathbb{R}^n} e^{ i \langle x,\xi\rangle}a(x_{0},\xi) \psi(2^{-j}\xi) \hat{u}(\xi)d\xi.
\end{eqnarray}
Then the desired estimate can be got by the same argument as above with $T_{j,i}u$ replaced by $T_{j,0}u$. So we complete the proof.
\end{proof}

Here, the range of $\lambda$ can be extended to $[1,\infty)$.
However, to make some sums convergent, $\lambda$ has to be confined to a finite range. To this end, fix positive integer $N_{\delta}>\frac{2n}{1-\delta}+n$ and take $$\max\{0,\frac{n}{2}(1-\delta)-\frac{nN_{\delta}}{N_{\delta}-n}\}<\theta_{0}<\frac{n}{2}(1-\delta)-\frac{n^{2}}{N_{\delta}-n}.$$
Denote
\begin{equation}\label{E5}
T_{\delta}=-\frac{n^{2}}{N_{\delta}}+(1-\frac{n}{N_{\delta}})(\frac{n}{2}(1-\delta)-\theta_{0}).
\end{equation}
Clearly $\theta_{0}>0$ is well defined and $T_{\delta}>0.$

\begin{lem}\label{L4}
Let $Q(x_{0},l)$ be a fixed cube with side length $l<1$. For any positive integer $j$ with $l^{-\frac{n-n^{2}/N_{\delta}}{T_{\delta}}}\leq2^{j}$, if $a\in S^{-n}_{0,\delta}$ with $0\leq\delta<1$ then
\begin{eqnarray*}
\frac{1}{|Q|}\int_{Q(x_{0},l)}|T_{j}u(x)|dx
&\lesssim&2^{-jT_{\delta}}l^{\frac{n^{2}}{N_{\delta}}-n}Mu(x_{0}),
\end{eqnarray*}
where $T_{\delta}$ is given by (\ref{E5}).
\end{lem}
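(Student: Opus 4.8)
The plan is to refine the frozen-coefficient estimate of Lemma \ref{L3}. The sole obstruction to a genuinely $j$-decaying bound there is the first term $2^{j\delta}l^{\lambda}$, which arises from replacing $a(x,\xi)$ by its value $a(x_i,\xi)$ at the center of a subcube (a first-order freezing) and decays too slowly in $j$; indeed, no $\lambda\ge 1$ can push the bound in Lemma \ref{L3} below the balanced rate $2^{-jn(1-\delta)/(n+2)}$, which can be weaker than the claimed $2^{-jT_{\delta}}$. First I would cover $Q(x_{0},l)$ by $\sim l^{n(1-\lambda)}$ subcubes $Q(x_{i},l^{\lambda})$ exactly as in Lemma \ref{L3}, but freeze the coefficients through a Taylor expansion of $a(\cdot,\xi)$ of order $N_{\delta}$ about each $x_{i}$, rather than to first order.

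Next I would estimate the two resulting pieces as in Lemmas \ref{L2} and \ref{L3}. Each polynomial term $(x-x_{i})^{\gamma}(\partial^{\gamma}_{x}a)(x_{i},\xi)\psi(2^{-j}\xi)$ with $|\gamma|<N_{\delta}$ is a frozen Fourier multiplier; writing its symbol as $[(\partial^{\gamma}_{x}a)(x_{i},\xi)|\xi|^{n}]\cdot[|\xi|^{-n}\chi_{j}(\xi)]$ and using Plancherel (the first factor is bounded by $2^{j|\gamma|\delta}$ on $|\xi|\sim 2^{j}$, the second has $L^{2}$-norm $\sim 2^{-jn/2}$) together with the near/far splitting in $y$ gives a contribution $\lesssim (l^{\lambda}2^{j\delta})^{|\gamma|}l^{-n\lambda/2}2^{-jn/2}Mu(x_{0})$. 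Arranging $\rho:=l^{\lambda}2^{j\delta}\le 1$, each term is at most the $|\gamma|=0$ term, so the polynomial part is $\lesssim l^{-n\lambda/2}2^{-jn/2}Mu(x_{0})$ (the constant depending on the fixed integer $N_{\delta}$). The order-$N_{\delta}$ remainder has symbol bounded by $|x-x_{i}|^{N_{\delta}}\langle\xi\rangle^{-n+N_{\delta}\delta}\lesssim l^{\lambda N_{\delta}}2^{j(-n+N_{\delta}\delta)}$ on $|\xi|\sim 2^{j}$, and hence contributes $\lesssim(l^{\lambda}2^{j\delta})^{N_{\delta}}Mu(x_{0})$. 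Altogether, for every $\lambda\ge 1$ with $l^{\lambda}2^{j\delta}\le 1$,
\[
\frac{1}{|Q|}\int_{Q(x_{0},l)}|T_{j}u(x)|\,dx\lesssim\Big(l^{-\frac{n\lambda}{2}}2^{-j\frac{n}{2}}+(l^{\lambda}2^{j\delta})^{N_{\delta}}\Big)Mu(x_{0}),
\]
the higher-order analogue of Lemma \ref{L3} in which the bad factor $2^{j\delta}$ now appears only to the $N_{\delta}$-th power.

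Then I would optimize in $\lambda$. Balancing the two terms gives $\lambda_{\ast}$ with $\lambda_{\ast}\log_{2}(1/l)=j(N_{\delta}\delta+\tfrac{n}{2})/(N_{\delta}+\tfrac{n}{2})$ and balanced value $2^{-j\,nN_{\delta}(1-\delta)/(n+2N_{\delta})}$; since this exponent is $<j$, the side-length condition $2^{j}\ge l^{-\lambda_{\ast}}$ needed for the frozen $L^{2}$-estimate holds automatically, and $\rho\le 1$ as well. When $\lambda_{\ast}\ge 1$ (i.e. $l$ close to $1$) the balanced value is $\le 2^{-jT_{\delta}}\le 2^{-jT_{\delta}}l^{n^{2}/N_{\delta}-n}$, because $l^{n^{2}/N_{\delta}-n}\ge 1$ and the choices $N_{\delta}>\tfrac{2n}{1-\delta}+n$, $\theta_{0}<\tfrac{n}{2}(1-\delta)-\tfrac{n^{2}}{N_{\delta}-n}$ force $0<T_{\delta}<(1-\tfrac{n}{N_{\delta}})\tfrac{n}{2}(1-\delta)-\tfrac{n^{2}}{N_{\delta}}$, which a direct computation shows is $<\tfrac{nN_{\delta}(1-\delta)}{n+2N_{\delta}}$. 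When $\lambda_{\ast}<1$ I would take $\lambda=1$ instead; the main term $l^{-n/2}2^{-jn/2}$ is then dominated by $2^{-jT_{\delta}}l^{n^{2}/N_{\delta}-n}$ since $n^{2}/N_{\delta}<\tfrac{n}{2}$ and $T_{\delta}<\tfrac{n}{2}$, while the remainder $(l2^{j\delta})^{N_{\delta}}$ is controlled using that one is in the regime $\log_{2}(1/l)>\lambda_{\ast}\log_{2}(1/l)$. Throughout, the hypothesis $2^{j}\ge l^{-(n-n^{2}/N_{\delta})/T_{\delta}}$ guarantees $2^{j}\ge l^{-1}$ (because $T_{\delta}<n-n^{2}/N_{\delta}$), so the $L^{2}$-based estimates underlying Lemmas \ref{L2}--\ref{L3} are legitimate; it is also exactly the range in which the subsequent $j$-summation converges, and the factor $l^{n^{2}/N_{\delta}-n}\ge 1$ is what absorbs the growth $l^{-n\lambda/2}$ of the main term for small $l$.

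The hard part will be the bookkeeping around the remainder. Because the seminorms controlling $\sup_{x}|\partial^{N_{\delta}}_{x}a(x,\xi)|\langle\xi\rangle^{n-N_{\delta}\delta}$ grow with the order of differentiation, $N_{\delta}$ cannot be sent to infinity and must be kept fixed; this is precisely why the attainable decay rate is $\tfrac{nN_{\delta}(1-\delta)}{n+2N_{\delta}}$ rather than $\tfrac{n}{2}(1-\delta)$, why one must settle for an exponent $T_{\delta}$ strictly below it, introduce the slack $\theta_{0}$, and restrict $j$ by the stated lower bound. Verifying that the admissible window for $\theta_{0}$ keeps $T_{\delta}$ simultaneously positive, below the balanced rate, and below $n-n^{2}/N_{\delta}$, and that the two sub-cases $\lambda=\lambda_{\ast}$ and $\lambda=1$ together cover the entire range $2^{j}\ge l^{-(n-n^{2}/N_{\delta})/T_{\delta}}$, is the crux of the argument.
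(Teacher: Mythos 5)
Your proof is correct, but it takes a genuinely different route from the paper's. The paper proves Lemma \ref{L4} with no subcube covering and no Taylor freezing at all: it splits $u=u_{1}+u_{2}$ at the single scale $\Gamma_{0}=l^{\frac{n}{N_{\delta}}}2^{j\frac{n}{N_{\delta}}+\frac{j}{N_{\delta}}(\frac{n}{2}(1-\delta)-\theta_{0})}$, estimates the near part by the quantitative $L^{1}$-boundedness of $T_{j}$ (viewing $a_{j}$ as an element of $S^{-\frac{n}{2}-\frac{n}{2}\delta-\theta_{0}}_{0,\delta}$ with seminorms $\lesssim 2^{-j\frac{n}{2}(1-\delta)+j\theta_{0}}$, so that $\|T_{j}u_{1}\|_{L^{1}}\lesssim 2^{-j\frac{n}{2}(1-\delta)+j\theta_{0}}\Gamma_{0}^{n}Mu(x_{0})$), and estimates the far part by $N_{\delta}$-fold integration by parts in $\xi$, giving $|T_{j}u_{2}(x)|\lesssim 2^{jn}\Gamma_{0}^{n-N_{\delta}}Mu(x_{0})$; the scale $\Gamma_{0}$ is chosen precisely so that both contributions equal $2^{-jT_{\delta}}l^{\frac{n^{2}}{N_{\delta}}-n}Mu(x_{0})$, which is where the formula (\ref{E5}) for $T_{\delta}$ and the window for $\theta_{0}$ come from, and the hypothesis $2^{j}\geq l^{-(n-n^{2}/N_{\delta})/T_{\delta}}$ enters only to force $\Gamma_{0}>1>l$ (hence $|y-x|\sim|y-x_{0}|$). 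You instead build the order-$N_{\delta}$ analogue of Lemma \ref{L3} and optimize the covering scale $\lambda$; I checked your key computations and they close: the balanced exponent is indeed $\frac{nN_{\delta}(1-\delta)}{n+2N_{\delta}}$ with $\rho\leq1$ automatic there; $\theta_{0}>0$ gives $T_{\delta}<(1-\frac{n}{N_{\delta}})\frac{n}{2}(1-\delta)-\frac{n^{2}}{N_{\delta}}$, which is below the balanced rate (the comparison reduces to $\frac{N_{\delta}}{n+2N_{\delta}}<1+\frac{2}{1-\delta}$); $T_{\delta}<n-\frac{n^{2}}{N_{\delta}}$ makes the hypothesis yield $2^{j}\geq l^{-1}$, which your $\lambda=1$ case needs; and in that case the remainder term is controlled because $\lambda_{*}<1$ means $\log_{2}(1/l)>j\bigl(\delta+\frac{n(1-\delta)}{n+2N_{\delta}}\bigr)\geq j\bigl(\delta+\frac{T_{\delta}}{N_{\delta}}\bigr)$, whence $(l2^{j\delta})^{N_{\delta}}\leq 2^{-jT_{\delta}}$, while $\rho\leq1$ also holds in that regime. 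As to what each approach buys: the paper's argument is shorter and recycles the $L^{1}$-estimate proposition of Section 2 (note its proof cites Proposition \ref{P1} but means that unlabeled $L^{1}$-boundedness statement), at the cost of making the value of $T_{\delta}$ look like an ansatz; your argument is more self-contained at this point (only Plancherel for frozen multipliers plus kernel integration by parts, no $(H^{1},L^{1})$-type input), it explains the numerology — $T_{\delta}$ must sit strictly below the intrinsic rate $\frac{nN_{\delta}(1-\delta)}{n+2N_{\delta}}$ of order-$N_{\delta}$ freezing with fixed $N_{\delta}$, the slack being $\theta_{0}$ — and it exhibits Lemma \ref{L4} as the natural endpoint of the Lemma \ref{L3} scheme rather than a separate device; the price is the heavier bookkeeping you anticipated (the $\gamma$-terms, the constraint $\rho\leq1$, the two-case analysis in $\lambda_{*}$, and the subcube $L^{2}$ condition $2^{j}\geq l^{-\lambda}$, which your observation $\lambda_{*}\log_{2}(1/l)<j$ correctly settles).
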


\begin{proof}
Denote
$$\Gamma_{0}=l^{\frac{n}{N_{\delta}}}2^{j\frac{n}{N_{\delta}}+j\frac{1}{N_{\delta}}\big(\frac{n}{2}(1-\delta)-\theta_{0}\big)},$$
Set $u_{1}(x)=u(x)\chi_{Q(x_{0},2\Gamma_{0})}(x)$ and $u_{2}(x)=u(x)-u_{1}(x)$.
Then
\begin{eqnarray}\label{E19}
\frac{1}{|Q|}\int_{Q(x_{0},l)}|T_{j}u(x)|dx
\leq\frac{1}{|Q|}\int_{Q(x_{0},l)}|T_{j}u_{1}(x)|dx+
\frac{1}{|Q|}\int_{Q(x_{0},l)}|T_{j}u_{2}(x)|dx.
\end{eqnarray}

Notice that $a_{j}(x,\xi)\in S^{-\frac{n}{2}-\frac{n}{2}\delta-\theta_{0}}_{0,\delta}$ with bounds $\lesssim 2^{-j\frac{n}{2}(1-\delta)+j\theta_{0}}$. The $L^{1}$-estimate of $T_{j}$ (Proposition \ref{P1}) gives that
\begin{eqnarray}\label{E20}
\frac{1}{|Q|}\int_{Q}|T_{j}u_{1}(x)|dx
&\lesssim& 2^{-j\frac{n}{2}(1-\varrho)+j\theta_{0}}\Gamma_{0}^{n}l^{-n}Mu(x_{0})=2^{-jT_{\delta}}l^{\frac{n^{2}}{N_{\delta}}-n}Mu(x_{0}).
\end{eqnarray}

Recall $\max\{0,\frac{n}{2}(1-\delta)-\frac{nN_{\delta}}{N_{\delta}-n}\}<\theta_{0}<\frac{n}{2}(1-\delta)-\frac{n^{2}}{N_{\delta}-n}$ and  $2^{j}\geq l^{-\frac{n-n^{2}/N_{\delta}}{T_{\delta}}}$. Then we have $\frac{n-n^{2}/N_{\delta}}{T_{\delta}}>1$ and  $2^{j}\geq l^{-\frac{n-n^{2}/N_{\delta}}{T_{\delta}}}\geq l^{-1}$ which implies that
$$\Gamma_{0}=(l2^{j})^{\frac{n}{N_{\delta}}}2^{j\frac{1}{N_{\delta}}\big(\frac{n}{2}(1-\delta)-\theta_{0}\big)}>1>l.$$
Consequently, one have
$|y-x|\sim|y-x_{0}|$ for $\forall x\in Q(x_{0},l)$ and $\forall y\in Q^{C}(x_{0},2\Gamma_{0})$. Integrating by parts $N_{\delta}$-th with respect to $\xi$ gives that
\begin{eqnarray}\label{E21}
|T_{j}u_{2}(x)|
\lesssim2^{jn}\Gamma_{0}^{(n-N_{\delta})}
Mu(x_{0})=2^{-jT_{\delta}}l^{\frac{n^{2}}{N_{\delta}}-n}Mu(x_{0}).
\end{eqnarray}
Clearly, the desired estimate follows from \eqref{E19}, \eqref{E20} and \eqref{E21}.
\end{proof}

Now we consider the case $l\geq1$, which can be done by a similar method as above lemma.

\begin{lem}\label{L5}
Let $l\geq1$ and $j$ be a positive integer. Suppose $0\leq\delta<1$, $0<\theta_{1}<\frac{n}{2}(1-\delta)$ and $\epsilon>0$.
If $a\in S^{-n}_{0,\delta}$, then for any positive integer $N>n$
\begin{equation}\label{E7}
\frac{1}{|Q|}\int_{Q(x_{0},l)}|T_{j}u(x)|dx
\lesssim(2^{-j\frac{n}{2}(1-\delta)+j\theta_{1}+j\epsilon n}+2^{-j\epsilon(N-n)}l^{-(N-n)})Mu(x_{0}).
\end{equation}
If $a\in L^{\infty}S^{-n}_{0}$, then for any positive integer $N>n$
\begin{equation}\label{E8}
\frac{1}{|Q|}\int_{Q(x_{0},l)}|T^{*}_{j}u(x)|dx
\lesssim(2^{-j\frac{n}{2}+j\theta_{1}+j\epsilon n}+2^{-j\epsilon(N-n)}l^{-(N-n)})Mu(x_{0}).
\end{equation}

\end{lem}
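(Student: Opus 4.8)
The plan is to mimic the local/far splitting used in the proof of Lemma~\ref{L4}, now adapted to the regime $l\geq 1$. I would fix the splitting radius $R=2^{j\epsilon}l$ and write $u=u_{1}+u_{2}$ with $u_{1}=u\,\chi_{Q(x_{0},2R)}$ and $u_{2}=u-u_{1}$, so that
$$\frac{1}{|Q|}\int_{Q(x_{0},l)}|T_{j}u(x)|\,dx\leq\frac{1}{|Q|}\int_{Q(x_{0},l)}|T_{j}u_{1}(x)|\,dx+\frac{1}{|Q|}\int_{Q(x_{0},l)}|T_{j}u_{2}(x)|\,dx.$$
The first term will produce the first summand of \eqref{E7} and the second term the far-away summand; the role of the free parameter $\epsilon$ is precisely to balance these two contributions, which is why $R$ is taken proportional to $2^{j\epsilon}$.

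For the local term I would invoke the $L^{1}$-boundedness estimate for $T_{a}$ recorded above. The key observation is that, since $a_{j}$ is frequency-localized to $|\xi|\sim 2^{j}$, the symbol $a_{j}\in S^{-n}_{0,\delta}$ may be regarded as an element of $S^{-\frac{n}{2}-\frac{n}{2}\delta-\theta_{1}}_{0,\delta}$ with bound $\lesssim 2^{-j(\frac{n}{2}(1-\delta)-\theta_{1})}$; here the constraint $0<\theta_{1}<\frac{n}{2}(1-\delta)$ guarantees simultaneously that this order lies below the $L^{1}$-critical order $-\frac{n}{2}-\frac{n}{2}\delta$ (so that the $L^{1}$ estimate applies) and that the exponent $\frac{n}{2}(1-\delta)-\theta_{1}$ is positive (so that the bound decays in $j$). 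Combining $\|T_{j}u_{1}\|_{L^{1}}\lesssim 2^{-j(\frac{n}{2}(1-\delta)-\theta_{1})}\|u_{1}\|_{L^{1}}$ with $\|u_{1}\|_{L^{1}}\lesssim (2R)^{n}Mu(x_{0})=2^{j\epsilon n}l^{n}Mu(x_{0})$ and dividing by $|Q|\sim l^{n}$ yields exactly $2^{-j\frac{n}{2}(1-\delta)+j\theta_{1}+j\epsilon n}Mu(x_{0})$.

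For the far term I would use an integration-by-parts kernel estimate. For $x\in Q(x_{0},l)$ and $y$ in the support of $u_{2}$ one has $|y-x_{0}|>2R\geq 2l$, whence $|y-x|\sim|y-x_{0}|$. Applying the adjoint of $L=\frac{(x-y)\cdot\nabla_{\xi}}{i|x-y|^{2}}$ to the oscillatory factor $N$ times, and using that when $\varrho=0$ the $\xi$-derivatives of $a_{j}$ neither gain nor lose decay, so that $|\partial^{\alpha}_{\xi}a_{j}|\lesssim\langle\xi\rangle^{-n}\sim 2^{-jn}$ on $|\xi|\sim 2^{j}$, the resulting frequency integral is $O(1)$ uniformly in $j$ and one obtains $|K_{j}(x,y)|\lesssim_{N}|x-y|^{-N}$. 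A standard annular decomposition then gives $|T_{j}u_{2}(x)|\lesssim\int_{|y-x_{0}|>2R}|y-x_{0}|^{-N}|u(y)|\,dy\lesssim R^{n-N}Mu(x_{0})=2^{-j\epsilon(N-n)}l^{-(N-n)}Mu(x_{0})$, the summation converging precisely because $N>n$. Adding the two bounds proves \eqref{E7}.

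The estimate \eqref{E8} for $T^{*}_{j}$ follows by the same two steps. The only changes are that one invokes the $L^{1}$-boundedness estimate for $T^{*}_{a}$, whose critical order is $-\frac{n}{2}$ rather than $-\frac{n}{2}-\frac{n}{2}\delta$ (this accounts for the exponent $2^{-j\frac{n}{2}+j\theta_{1}+j\epsilon n}$ in \eqref{E8}), and that the integration by parts is carried out on $K^{*}_{j}(x,y)=\frac{1}{(2\pi)^{n}}\int_{\mathbb{R}^{n}}e^{i\langle x-y,\xi\rangle}a_{j}(y,\xi)\,d\xi$, for which only $\|\partial^{\alpha}_{\xi}a_{j}(\cdot,\xi)\|_{L^{\infty}}\lesssim\langle\xi\rangle^{-n}$ is needed; in particular no smoothness in $y$ is used, consistent with the rough class $L^{\infty}S^{-n}_{0}$. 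I expect the only genuinely delicate points to be the frequency-localization bookkeeping that recasts $a_{j}$ as a symbol of the $L^{1}$-critical order with the sharp $j$-dependent constant, and the verification that the $\xi$-integral in the kernel estimate stays $O(1)$ uniformly in $j$; the geometric comparison $|y-x|\sim|y-x_{0}|$ and the annular summation are routine once $R\gtrsim l$ is secured, which holds since $2^{j\epsilon}\geq 1$.
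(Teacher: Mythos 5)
Your proposal is correct and follows essentially the same route as the paper: the same splitting $u=u_{3}+u_{4}$ at radius $\Gamma=2^{j\epsilon}l$, the same recasting of the frequency-localized symbol $a_{j}$ as an element of $S^{-\frac{n}{2}-\frac{n}{2}\delta-\theta_{1}}_{0,\delta}$ (resp.\ $L^{\infty}S^{-\frac{n}{2}-\theta_{1}}_{0}$) with constant $\lesssim 2^{-j(\frac{n}{2}(1-\delta)-\theta_{1})}$ so that the $L^{1}$ proposition applies to the local piece, and the same $N$-fold integration by parts in $\xi$ with $|y-x|\sim|y-x_{0}|$ for the far piece. Your write-up is in fact slightly more careful than the paper's on two points it leaves implicit, namely the uniform-in-$j$ bound $\int_{|\xi|\sim 2^{j}}\langle\xi\rangle^{-n}\,d\xi=O(1)$ in the kernel estimate and the explicit treatment of the adjoint case \eqref{E8}, which the paper omits.
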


\begin{proof}
Denote
$$\Gamma=l2^{j\epsilon},$$
Set $u_{3}(x)=u(x)\chi_{Q(x_{0},2\Gamma)}(x)$ and $u_{4}(x)=u(x)-u_{3}(x)$.
Then
\begin{eqnarray}
\frac{1}{|Q|}\int_{Q(x_{0},l)}|T_{j}u(x)|dx
\leq\frac{1}{|Q|}\int_{Q(x_{0},l)}|T_{j}u_{3}(x)|dx+
\frac{1}{|Q|}\int_{Q(x_{0},l)}|T_{j}u_{4}(x)|dx.
\end{eqnarray}

Notice that $a_{j}(x,\xi)\in S^{-\frac{n}{2}-\frac{n}{2}\delta-\theta_{1}}_{0,\delta}$ with bounds $\lesssim 2^{-j\frac{n}{2}(1-\delta)+j\theta_{1}}$. The $L^{1}$-estimate of $T_{j}$ give that
\begin{eqnarray}
\frac{1}{|Q|}\int_{Q}|T_{j}u_{3}(x)|dx
&\lesssim& 2^{-j\frac{n}{2}(1-\varrho)+j\theta_{1}+j\epsilon n}Mu(x_{0}).
\end{eqnarray}

Notice that $|y-x|\sim|y-x_{0}|$ for $\forall x\in Q(x_{0},l)$ and $\forall y\in Q^{C}(x_{0},2\Gamma)$. So integrating by parts gives that
\begin{eqnarray}
|T_{j}u_{4}(x)|
\lesssim2^{-j\epsilon(N-n)}l^{-(N-n)}
Mu(x_{0}).
\end{eqnarray}
Clearly, the desired estimate follows from \eqref{E19}, \eqref{E20} and \eqref{E21}.
\end{proof}

\begin{proof}[Proof of Theorem \ref{Th2}]
Without loss of generality, we assume that the symbol $a(x,\xi)$ vanishes for $|\xi|\leq 1$. Let $Q=Q(x_{0},l)$ denote the cube centered at $x_{0}$ with the side length $l.$
We are going to prove
\begin{eqnarray}\label{E00}
\frac{1}{|Q|}\int_{Q}|T^{*}_{a}u(x)-C_{Q}|dx\lesssim Mu(x_{0}),
\end{eqnarray}
where $C_{Q}=\frac{1}{|Q|}\int_{Q}T^{*}_{a}u(y)dy.$ By Lemma \ref{L0}, it suffices to prove
\begin{eqnarray}\label{E9}
\sum_{j}\frac{1}{|Q|^{2}}\int_{Q}\int_{Q}|T^{*}_{j}u(x)-T^{*}_{j}u(y)|dydx\lesssim Mu(x_{0}).
\end{eqnarray}

For the case $l\geq1,$ the left hand above can be controlled by
$$\sum_{j}\frac{2}{|Q|}\int_{Q(x_{0},l)}|T^{*}_{j}u(x)|dx.$$
Applying Lemma \ref{L5} with taking $0<\epsilon<\frac{1}{2}-\frac{\theta_{1}}{n}$, one can get the desired estimate immediately.

For the case $l<1$. Break up the sum in the left hand in (\ref{E9}) as follows
\begin{eqnarray*}
\sum\limits_{2^{j}<l^{-1}}\frac{1}{|Q|^{2}}\int_{Q}\int_{Q}|T^{*}_{j}u(x)-T^{*}_{j}u(y)|dydx
+\sum\limits_{l^{-1}<2^{j}}\frac{2}{|Q|}\int_{Q(x_{0},l)}|T^{*}_{j}u(x)|dx
\end{eqnarray*}
Then, (\ref{E2}) and (\ref{E4}) give that the corresponding sum is bounded by
$$ \sum\limits_{2^{j}<l^{-1}}2^{j}lMu(x_{0})+\sum\limits_{l^{-1}<2^{j}}2^{-j\frac{n}{2}}l^{-\frac{n}{2}}Mu(x_{0})
\lesssim Mu(x_{0}),$$
respectively. So the proof is finished.
\end{proof}

\begin{proof}[Proof of Theorem \ref{Th1}]

It can be proved by a similar argument as above. Clearly, it suffices to show
\begin{eqnarray}\label{E6}
\sum_{j}\frac{1}{|Q|^{2}}\int_{Q}\int_{Q}|T_{j}u(x)-T_{j}u(y)|dydx.
\end{eqnarray}

For the case $l\geq1$. By (\ref{E7}) in Lemma \ref{L5}, we can get that it can bounded by
$$\sum_{j}(2^{-j\frac{n}{2}(1-\delta)+j\theta_{1}+j\epsilon n}+2^{-j\epsilon(N-n)}l^{-(N-n)})Mu(x_{0}).$$
Take $0<\epsilon<\frac{1-\delta}{2}-\frac{\theta_{1}}{n}$, then the sum is convergent.

For the case $l<1$. If $\delta=0$, break up the sum in (\ref{E6}) as follows
$$\sum\limits_{2^{j}<l^{-1}}
+\sum\limits_{l^{-1}<2^{j}}.$$
Then Lemma \ref{L1} and Lemma \ref{L2} imply that (\ref{E6}) can be bounded by
$$\sum\limits_{2^{j}<l^{-1}}2^{j}l Mu(x_{0})
+\sum\limits_{l^{-1}<2^{j}}2^{-j\frac{n}{2}}l^{-\frac{n}{2}}Mu(x_{0})\lesssim Mu(x_{0}).$$
If $0<\delta<1$, break up the sum further
$$\sum\limits_{2^{j}<l^{-1}}+\sum\limits_{l^{-1}\leq2^{j}\leq l^{-(n-n^{2}/N_{\delta})/T_{\delta}}}
+\sum\limits_{l^{-(n-n^{2}/N_{\delta})/T_{\delta}}<2^{j}},$$
where $N_{\delta}$ and $T_{\delta}$ are given in Lemma \ref{L4}. Using Lemma \ref{L1} and Lemma \ref{L4}, one can get that the first term and the last term are bounded by
$$\sum\limits_{2^{j}<l^{-1}}2^{j}l Mu(x_{0})
+\sum\limits_{l^{-(n-n^{2}/N_{\delta})/T_{\delta}}<2^{j}}2^{-jT_{\delta}}l^{\frac{n^{2}}{N_{\delta}}-n}Mu(x_{0})\lesssim Mu(x_{0}).$$
For the second term, we write
\begin{eqnarray*}
\sum\limits_{l^{-1}<2^{j}\leq l^{-(n-n^{2}/N_{\delta})/T_{\delta}}}\frac{2}{|Q|}\int_{Q}|T_{j}u(x)|dx
&=&\big(\sum\limits_{l^{-1}<2^{j}\leq l^{-\frac{1}{\delta}}}
+\sum\limits_{l^{-\frac{1}{\delta}}<2^{j}\leq l^{-\frac{1}{\delta^{2}}}}+...
+\sum\limits_{l^{-\frac{1}{\delta^{k-1}}}<2^{j}\leq l^{-\frac{1}{\delta^{k}}}}\\
&+&...
+\sum\limits_{l^{-\frac{1}{\delta^{\gamma-1}}}<2^{j}\leq l^{-(n-n^{2}/N_{\delta})/T_{\delta}}}\big)
\frac{2}{|Q|}\int_{Q(x_{0},l)}|T_{j}u(x)|dx,
\end{eqnarray*}
where $\gamma$ is the first positive integer such that $\frac{1}{\delta^{\gamma}}\geq \frac{n-n^{2}/N_{\delta}}{T_{\delta}}$.
Then take $\lambda=\frac{1}{\delta^{k}}$, $k=0,1,...,\gamma-1$ in Lemma \ref{L3} respectively, we can see that each sum above is bounded by $Mu(x_{0})$.
\end{proof}

\section{Applications to Fourier Integral Operators}

Fourier integral operators (FIOs), introduced by H\"{o}rmander in \cite{HormanderI}, can be formally expressed as
\begin{eqnarray}\label{F1}
T_{a,\varphi}u(x)
&=& \int_{\mathbb{R}^n} e^{i \varphi(x,\xi)} a(x,\xi) \hat{u}(\xi) \, d\xi,
\end{eqnarray}
where $a$ is the amplitude function and $\varphi$ is the phase function. The regularity of these operators has been extensively studied since the 1980s in works such as \cite{HormanderI,Tao,SSS,Eskin,Beals,Littman,Miyachi,Peral,Beals2,Greenleaf}. More recently, some results have been revisited in \cite{Wolfgang,Wolfgang1,SZ,WCY,WCY1,LW}. However, to the best of our knowledge, there has been limited research on the weighted regularity of these operators, with notable contributions in \cite{Ruzhansky,Wolfgang}.

In this study, we focus on a class of phase functions, originally defined by Dos Santos Ferreira et.al. \cite{Wolfgang}.
\begin{defn}
\textit{\((\Phi^k)\)} A real-valued function \( \varphi(x,\xi) \) belongs to the class \( \Phi^k \) if the following conditions hold:
\begin{itemize}
    \item \( \varphi(x,\xi) \in C^\infty(\mathbb{R}^n \times \mathbb{R}^n \setminus \{0\}) \),
    \item \( \varphi(x,\xi) \) is positively homogeneous of degree 1 in the frequency variable \( \xi \),
    \item For any multi-indices \( \alpha \) and \( \beta \) with \( |\alpha| + |\beta| \geq k \), there exists a positive constant \( C_{\alpha,\beta} \) such that
    \[
    \sup_{(x,\xi) \in \mathbb{R}^n \times \mathbb{R}^n \setminus \{0\}} |\xi|^{-1 + |\alpha|} |\partial_{\xi}^\alpha \partial_{x}^\beta \varphi(x,\xi)| \leq C_{\alpha,\beta}.
    \]
\end{itemize}
\end{defn}

\begin{defn}
\textit{\((L^\infty \Phi^k)\)} A real-valued function \( \varphi(x,\xi) \) belongs to the phase class \( L^\infty \Phi^k \) if it satisfies the following conditions:
\begin{itemize}
    \item \( \varphi(x,\xi) \) is positively homogeneous of degree 1 in the frequency variable \( \xi \),
    \item \( \varphi(x,\xi) \) is smooth on \( \mathbb{R}^n\backslash\{0\} \) in the frequency variable \( \xi \) and is bounded measurable in the spatial variable \( x \),
    \item For all multi-indices \( |\alpha| \geq k \), it satisfies
    \[
    \sup_{\xi \in \mathbb{R}^n_*} |\xi|^{-1+|\alpha|} \|\partial_{\xi}^{\alpha} \varphi(x,\xi)\|_{L^\infty(\mathbb{R}^n)} \leq C_{\alpha},
    \]
    where \( C_{\alpha} \) is a positive constant depending only on \( \alpha \).
\end{itemize}
\end{defn}

Let $ b(x, \xi) = a(x, \xi) e^{i (\varphi(x, \xi) - \langle x, \xi \rangle)} $, so that we can rewrite $ T_{a, \varphi} u(x) $ as:
\[
T_{a, \varphi} u(x) = \int_{\mathbb{R}^n} e^{i \langle x, \xi \rangle} e^{i (\varphi(x, \xi) - \langle x, \xi \rangle)} a(x, \xi) \hat{u}(\xi) \, d\xi = T_b u(x).
\]
If $ a \in S^{-n}_{1,\delta} $ and $ \varphi(x,\xi) - \langle x,\xi \rangle \in \Phi^1 $, then $ b(x, \xi) \in S^{-n}_{0,\delta} $. Therefore, by Theorem \ref{Th1}, we obtain the following result:

\begin{thm}\label{Th3}
Let $ a \in S^{-n}_{1,\delta} $ with $ 0 \leq \delta < 1 $ and $ \varphi(x, \xi) - \langle x, \xi \rangle \in \Phi^1 $. Then the operators $ T_{a, \varphi} $ defined by (\ref{F1}) satisfy:
\[
\| T_{a, \varphi} u \|_{L^{1, \infty}_{\omega}} \lesssim\| u \|_{L^1_{\omega}}.
\]
\end{thm}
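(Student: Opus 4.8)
The plan is to realise the Fourier integral operator as an ordinary pseudo-differential operator and then quote Theorem \ref{Th1}. Writing $\theta(x,\xi)=\varphi(x,\xi)-\langle x,\xi\rangle$ and $b(x,\xi)=a(x,\xi)e^{i\theta(x,\xi)}$, the identity $T_{a,\varphi}u=T_bu$ recorded just before the statement is exact, so the whole assertion reduces to the single claim that $b\in S^{-n}_{0,\delta}$. Once that membership is established, applying Theorem \ref{Th1} to $T_b$ with the weight $\omega\in A_1$ gives $\|T_{a,\varphi}u\|_{L^{1,\infty}_{\omega}}=\|T_bu\|_{L^{1,\infty}_{\omega}}\lesssim\|u\|_{L^{1}_{\omega}}$, which is precisely the desired estimate. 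Thus no genuinely new harmonic analysis is needed: the entire content of the proof is the symbol estimate for $b$.

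To verify $b\in S^{-n}_{0,\delta}$ I would estimate $\partial_\xi^\alpha\partial_x^\beta b$ directly. By the Leibniz rule this is a finite sum over $\alpha'\leq\alpha$, $\beta'\leq\beta$ of terms $(\partial_\xi^{\alpha'}\partial_x^{\beta'}a)\,\partial_\xi^{\alpha-\alpha'}\partial_x^{\beta-\beta'}(e^{i\theta})$, and each derivative of $e^{i\theta}$ is, by the Fa\`a di Bruno formula, a product of $e^{i\theta}$ (of modulus $1$) with factors $\partial_\xi^{\mu}\partial_x^{\nu}\theta$ satisfying $|\mu|+|\nu|\geq1$. Here the two hypotheses enter: $a\in S^{-n}_{1,\delta}$ supplies $|\partial_\xi^{\alpha'}\partial_x^{\beta'}a|\lesssim\langle\xi\rangle^{-n-|\alpha'|+\delta|\beta'|}$, while $\theta\in\Phi^1$ supplies $|\partial_\xi^{\mu}\partial_x^{\nu}\theta|\lesssim|\xi|^{1-|\mu|}$ for $|\mu|+|\nu|\geq1$. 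The favourable direction is the frequency variable: every $\xi$-derivative that lands on $a$ saves a factor $\langle\xi\rangle^{-1}$ (this is where $\varrho=1$ is used), whereas a $\xi$-derivative landing on the phase produces $\partial_\xi\theta$, which by $\Phi^1$ is of order $|\xi|^0$ and hence neither gains nor loses; this is exactly what yields the $\varrho=0$ structure $\langle\xi\rangle^{-n}$ for pure $\xi$-derivatives of $b$.

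The hard part will be the $x$-derivatives. A factor $\partial_x^{\nu}\theta$ unaccompanied by any $\xi$-derivative is only controlled by $|\xi|^{1}$ through $\Phi^1$, which a priori is much larger than the budget of $\langle\xi\rangle^{\delta}$ per $x$-derivative that membership in $S^{-n}_{0,\delta}$ with $\delta<1$ allows. Reconciling the $|\xi|$-growth of $\partial_x\theta$ with the $\langle\xi\rangle^{\delta|\beta|}$ target is therefore the delicate point, and I would attempt to handle it by exploiting more than the crude $\Phi^1$ bounds, namely the homogeneity of $\theta$ of degree $1$ in $\xi$ together with the exact cancellation built into $\theta=\varphi-\langle x,\xi\rangle$, reorganising and absorbing the dangerous contributions. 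I expect this reconciliation to be the only real obstacle; after it is resolved, collecting the Leibniz terms yields $b\in S^{-n}_{0,\delta}$, and invoking Theorem \ref{Th1} completes the proof.
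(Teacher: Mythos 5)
Your reduction is exactly the paper's own proof: the paper, too, sets $b(x,\xi)=a(x,\xi)e^{i\theta(x,\xi)}$ with $\theta=\varphi-\langle x,\xi\rangle$, asserts in one line that $a\in S^{-n}_{1,\delta}$ and $\theta\in\Phi^1$ imply $b\in S^{-n}_{0,\delta}$, and then quotes Theorem \ref{Th1}; your treatment of the $\xi$-derivatives is correct and is all that part needs. But the difficulty you flag in your final paragraph is not a removable technicality, and the cancellation you hope for does not exist. The class $\Phi^1$ only yields $|\partial_x^\beta\theta|\lesssim|\xi|$, and this bound is saturated \emph{after} the subtraction of $\langle x,\xi\rangle$ has already been performed. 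Concretely, take $\varphi(x,\xi)=\langle x,\xi\rangle+|\xi|\phi(x)$ with $\phi$ smooth, bounded, nonconstant, with bounded derivatives of all orders; then $\theta=|\xi|\phi(x)\in\Phi^1$, and for the $x$-independent amplitude $a(\xi)=\langle\xi\rangle^{-n}\in S^{-n}_{1,\delta}$ one gets $|\partial_{x_j}b|=\langle\xi\rangle^{-n}\,|\xi|\,|\partial_{x_j}\phi(x)|\sim\langle\xi\rangle^{-n+1}$, which violates the bound $\langle\xi\rangle^{-n+\delta}$ required for $S^{-n}_{0,\delta}$ whenever $\delta<1$. Homogeneity of degree one in $\xi$ is precisely what forces each pure $x$-derivative of $\theta$ to have size $|\xi|$; it cannot be used to absorb the loss. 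So in general $b\in S^{-n}_{0,1}$ and in no class $S^{-n}_{0,\delta}$ with $\delta<1$, and Theorem \ref{Th1} does not apply.

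Moreover, the membership $b\in S^{-n}_{0,1}$ that the conjugation does give is useless for the conclusion: by the Guo--Zhu example cited in the introduction, $S^{-n}_{0,1}$ contains symbols whose operators fail even unweighted weak type $(1,1)$, i.e.\ fail the asserted estimate already for $\omega\equiv1\in A_1$. Hence any correct proof must either exploit the oscillatory structure of $e^{i\theta}$ beyond symbol-class bookkeeping, or strengthen the phase hypothesis, e.g.\ to $|\partial_\xi^\alpha\partial_x^\beta\theta|\lesssim|\xi|^{\delta-|\alpha|}$ for $|\beta|\geq1$, under which your Leibniz--Fa\`a di Bruno computation does yield $b\in S^{-n}_{0,\delta}$ and the rest of your argument closes. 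To be clear about attribution: the step you could not verify is asserted without proof in the paper, so the gap you identified is a defect of the paper's own argument for Theorem \ref{Th3}, not a deficiency peculiar to your attempt. Note the contrast with Theorem \ref{Th4}, where the same conjugation is sound: the target class $L^\infty S^{-n}_{0}$ involves no $x$-derivatives at all, and every $\xi$-derivative of $e^{i\theta}$ is $O(1)$.
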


Similarly, defining the dual operator by

\begin{eqnarray}\label{F2}
T^{*}_{a,\varphi}u(x)
&=& \int_{\mathbb{R}^n} \int_{\mathbb{R}^n} e^{i(\varphi(y,\xi) - \langle x,\xi\rangle)} a(y,\xi) \, d\xi \, u(y) \, dy,
\end{eqnarray}
we obtain the following result:

\begin{thm}\label{Th4}
Let $ a \in L^\infty S^{-n}_{1} $ and $ \varphi(x, \xi) - \langle x, \xi \rangle \in L^\infty \Phi^1 $. Then the operators $ T^{*}_{a, \varphi} $ defined by (\ref{F2}) satisfy:
\[
\| T^{*}_{a, \varphi} u \|_{L^{1, \infty}_{\omega}} \lesssim \| u \|_{L^1_{\omega}}.
\]
\end{thm}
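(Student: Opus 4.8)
The plan is to reduce $T^{*}_{a,\varphi}$ to a rough dual pseudo-differential operator $T^{*}_{b}$ and then invoke Theorem \ref{Th2}, mirroring the reduction already used for Theorem \ref{Th3}. First I would absorb the nonlinear part of the phase into the amplitude by setting
\[
b(y,\xi) = a(y,\xi)\, e^{\,i(\varphi(y,\xi) - \langle y,\xi\rangle)}, \qquad \psi(y,\xi):=\varphi(y,\xi)-\langle y,\xi\rangle .
\]
Writing the exponent of \eqref{F2} as $\varphi(y,\xi)-\langle x,\xi\rangle = \langle y-x,\xi\rangle + \psi(y,\xi)$ gives
\[
T^{*}_{a,\varphi}u(x) = \int_{\mathbb{R}^n}\!\int_{\mathbb{R}^n} e^{\,i\langle y-x,\xi\rangle}\, b(y,\xi)\,d\xi\, u(y)\,dy .
\]
After the harmless change of variables $\xi\mapsto-\xi$, which matches the sign convention of \eqref{df*} and, by positive homogeneity of $\varphi$, preserves membership in any rough H\"ormander class, this is exactly $T^{*}_{b}u$. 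Thus the whole problem collapses to verifying that $b\in L^{\infty}S^{-n}_{0}$.

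The main work, and the only genuine obstacle, is this symbol estimate, i.e. $\|\partial^{\alpha}_{\xi}b(\cdot,\xi)\|_{L^{\infty}}\lesssim\langle\xi\rangle^{-n}$ for every multi-index $\alpha$. I would first treat the unimodular factor $e^{i\psi}$. Since $\psi\in L^{\infty}\Phi^{1}$, the hypothesis yields $\|\partial^{\gamma}_{\xi}\psi(\cdot,\xi)\|_{L^{\infty}}\lesssim|\xi|^{1-|\gamma|}$ for every $|\gamma|\geq 1$; note that $\psi$ itself is unbounded, but it enters only through the modulus-one exponential, so its size is irrelevant. By the Fa\`a di Bruno formula, $\partial^{\alpha}_{\xi}e^{i\psi}$ is a finite sum of terms $e^{i\psi}\prod_{k}\partial^{\gamma_{k}}_{\xi}\psi$ with $\sum_{k}\gamma_{k}=\alpha$ and each $|\gamma_{k}|\geq 1$. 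A term built from $s$ factors is bounded by $\prod_{k}|\xi|^{1-|\gamma_{k}|}=|\xi|^{\,s-|\alpha|}$, and since $s\leq|\alpha|$ (each part having size at least one), this is $\leq 1$ on the region $|\xi|\geq 1$. Hence $\|\partial^{\alpha}_{\xi}e^{i\psi}(\cdot,\xi)\|_{L^{\infty}}\lesssim 1$ for all $\alpha$, i.e. the phase factor lies in $L^{\infty}S^{0}_{0}$.

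I would then combine this with the hypothesis $a\in L^{\infty}S^{-n}_{1}$, which reads $\|\partial^{\beta}_{\xi}a(\cdot,\xi)\|_{L^{\infty}}\lesssim\langle\xi\rangle^{-n-|\beta|}$. The Leibniz rule gives
\[
\partial^{\alpha}_{\xi}b = \sum_{\beta\leq\alpha}\binom{\alpha}{\beta}\,\partial^{\beta}_{\xi}a\,\cdot\,\partial^{\alpha-\beta}_{\xi}e^{i\psi},
\]
and each summand is controlled by $\langle\xi\rangle^{-n-|\beta|}\cdot 1\lesssim\langle\xi\rangle^{-n}$, the extra decay gained from differentiating $a$ exactly compensating the absence of decay in the phase derivatives; the $\beta=0$ term is the worst and already gives the claimed $\langle\xi\rangle^{-n}$ bound uniformly in $\alpha$. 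Since $\varphi$ is merely homogeneous of degree one and hence singular at $\xi=0$, I would, as in the proof of Theorem \ref{Th2}, assume without loss of generality that $a$ vanishes for $|\xi|\leq 1$ (the low-frequency piece being a harmless bounded operator); this confines everything to $|\xi|\geq 1$, where $\langle\xi\rangle\sim|\xi|$ and $b$ is genuinely smooth in $\xi$, so the estimates above hold and $b\in L^{\infty}S^{-n}_{0}$ unambiguously. With this in hand and $\omega\in A_{1}$, Theorem \ref{Th2} applies to $T^{*}_{b}=T^{*}_{a,\varphi}$ and delivers $\|T^{*}_{a,\varphi}u\|_{L^{1,\infty}_{\omega}}\lesssim\|u\|_{L^{1}_{\omega}}$, completing the argument.
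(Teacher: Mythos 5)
Your proposal is correct and takes essentially the same route as the paper: write $\varphi(y,\xi)-\langle x,\xi\rangle=\langle y-x,\xi\rangle+\psi(y,\xi)$, absorb the unimodular factor $e^{i\psi}$ into the amplitude to get $b=ae^{i\psi}$, verify $b\in L^{\infty}S^{-n}_{0}$ (your Fa\`a di Bruno/Leibniz computation, using the $L^{\infty}\Phi^{1}$ bounds $\|\partial^{\gamma}_{\xi}\psi(\cdot,\xi)\|_{L^{\infty}}\lesssim|\xi|^{1-|\gamma|}$ for $|\gamma|\geq1$, is exactly the calculation the paper leaves implicit when it says ``similarly''), and then invoke Theorem \ref{Th2}. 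Your extra care with the $\xi\mapsto-\xi$ sign convention and with cutting off $|\xi|\leq1$ (where the homogeneous phase is singular) matches, and indeed slightly sharpens, the paper's own tacit reduction, which relies on the same ``assume $a$ vanishes for $|\xi|\leq1$'' convention used in the proof of Theorem \ref{Th2}.
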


Under the condition of Theorem \ref{Th3}, we can use Proposition \ref{P1} to obtain:
\[
M^{\sharp}(T_{a,\varphi} u)(x) \lesssim M u(x).
\]
Moreover, $ T_{a,\varphi} $ is a bounded operator on $ L^{p}_{\omega} $ for $ 1 < p < \infty $ and $ \omega \in A_p $. A similar result holds for $ T^{*}_{a,\varphi} $ under the condition of Theorem \ref{Th4}.

$\bf{Counterexamples:}$
The sharpness of Theorem \ref{Th1} and Theorem \ref{Th2} follows from following counterexample going back to \cite{Kurtz}.
Let $0>m>-n$ and $a(x,\xi)=e^{ i\xi_{1}}(1+|\xi|^{2})^{\frac{m}{2}}$, where $\xi_{1}$ is the first component of $\xi\in\mathbb{R}^{n}$. Clearly $a(x,\xi)\in S^{m}_{0,0}$. Consider PDOs
$$T_{a}u(x)=\int_{\mathbb{R}^n} e^{ i \langle x,\xi\rangle+i\xi_{1}} (1+|\xi|^{2})^{\frac{m}{2}}\hat{u}(\xi)d\xi$$
\bibliographystyle{Plain}
Choose $a$ and $b$ so that $0<-m<a<n$, $0<b<n$, $a+b+m>n$, and let
\begin{center}
$u(x)=|x|^{-a}\chi_{|x|<\eta}(x)$, $\omega=|x-x_{0}|^{-b}$,
\end{center}
where $x_{0}=(1,0,...,0)$ and $\eta>0$ small enough. Clearly, $u\in L^{1}$ and $\omega\in A_{1}$. Now we check $T_{a}u\notin L^{1,\infty}_{\omega}$.
Notice that
$$T_{a}u(x)=G_{-m}(\cdot-x_{0})\ast u(x),$$
where $G_{-m}(x)=\big((1+|\cdot|^{2})^{\frac{m}{2}}\hat{\big)}(x)$ is the Bessel potential of order $-m$. It is well known $G_{-m}(x)\sim |x|^{-n-m}$ near $x=0$. Then we see that
\begin{center}
$T_{a}u(x)\geq C_{0}|x-x_{0}|^{-a-m}$ near $x=x_{0}.$
\end{center}
Therefore, $\forall\epsilon>0$ small enough
\begin{eqnarray*}
&&C_{0}\epsilon^{-a-m}\omega(\{x\in\mathbb{R}^{n}:|T_{a}u(x)|>C_{0}\epsilon^{-a-m}\})\\
&\geq&C_{0}\epsilon^{-a-m}\omega(\{x\in\mathbb{R}^{n}:|T_{a}u(x)|>C_{0}\epsilon^{-a-m},|x-x_{0}|<\epsilon\})\\
&\geq& C_{0}\epsilon^{-a-m}\omega(\{x\in\mathbb{R}^{n}: |x-x_{0}|<\epsilon\})\\
&\geq& C_{0}\epsilon^{-a-m-b+n}
\end{eqnarray*}
The proof is finished by the fact $-a-m-b+n<0$.

\end{sloppypar}
\end{document}